\theoremstyle{plain}
\newtheorem{thm}{Theorem}[section]
\newtheorem{lem}[thm]{Lemma}
\newtheorem{cor}[thm]{Corollary}
\newtheorem{defn-lem}[thm]{Definition-Lemma}
\newtheorem{prop}[thm]{Proposition}
\theoremstyle{definition}
\newtheorem{defn}[thm]{Definition}
\newtheorem{rem}[thm]{Remark}
\def\rep{\operatorname{rep}}
\def\dim{\operatorname{dim}}
\def\Rok{\operatorname{Rok}}
\def\Ad{\operatorname{Ad}}
\def\id{\operatorname{id}}
\def\Aut{\operatorname{Aut}}
\def\Rok{\operatorname{Rok}}
\newcommand{\mc}{\mathcal}
\begin{document}
\title [A tracially sequentially-split $\sp*$-homomorphisms between $C\sp*$-algebras V]
       { On Dualities of actions}

\begin{abstract}
 We introduce the notion of the weak tracial approximate representability of a discrete group action on a unital  $C\sp*$-algebra which could have no projections like the Jiang-Su algebra $\mc{Z}$. Then we show a duality between the weak tracial Rokhlin property and the weak tracial approximate representability.   More precisely, when $G$ is a finite abelian group and $\alpha:G\to\Aut(A)$ is a group action on a unital simple infinite dimensional $C\sp*$-algebra, we prove that  
 \begin{itemize}
\item $\alpha$ has the weak tracial Rokhlin property if and only if $\widehat{\alpha}$ has the weak tracial approximate representability.
\item $\alpha$ has the weak tracial approximate representability if and only if $\widehat{\alpha}$ has the weak  tracial Rokhlin property.
\end{itemize}

\end{abstract}

\author { Hyun Ho \, Lee}

\address {Department of Mathematics\\
          University of Ulsan\\
         Ulsan, South Korea 44610 \\
  }
\email{hadamard@ulsan.ac.kr}

\keywords{Tracially sequentially-split map by order zero, The weak tracial Rokhlin property,  The weak tracial approximate representability, Dualities}

\subjclass[2010]{Primary:46L55. Secondary:46L35,46L80}
\date{}
\thanks{The first author's research was supported by Basic Science Research Program through the National Research Foundation of Korea(NRF) funded by the Ministry of Education(NRF-2018R1D1A1A01057489)\\
}
\maketitle


\section{Introduction}
The Rokhlin property for group actions on $C\sp*$-algebras, which was proposed as ``outerness condition'' by Kishimoto, can be regarded as a regularity condition, which is often used to show that various structural properties pass from a $C\sp*$-algebra to its crossed product. In spite of  some successes \cite{Izumi:finite, Kishi, Kishi1} it is hard to find the group action which possesses the Rokhlin property. In particular, in the case of finite group actions  the Rokhlin property is very rigid in the sense that it implies that the unit of the algebra can be written as a sum of projections indexed by the group. A less restrictive one named the tracial Rokhlin property was suggested by Phillips \cite{Phillips:tracial}, in which the Rokhlin projections are not required to add up to unity,  but just up to a small error in trace. It turns out that there are many examples of finite group actions with the tracial Rokhlin property \cite{Phillips:examples}. However, its defect is to demand the existence of abundant projections whenever we ask for and this restriction cannot be neglected because now we know that there are many projectionless unital $C\sp*$-algebras like the Jiang-Su algebra (see \cite{BJ}, \cite{JS} for instance). 

 In the case of finite group actions, an  earlier effort to overcome the flaw appeared in Hirshberg and Orovitz's  work in \cite{HO} and a different weakening of it was proposed  by Hirshberg and Phillips in \cite{HP}, in which they define the notion called Rokhlin dimension  replacing the single tower of projections with several towers consisting of positive contractions. The latter has been extended to the case of compact group actions by Gardella \cite{G:compact}. In this paper, we  look more closely Hirshberg and Orovitz's definition of  the generalized tracial Rokhlin property of a finite group action and based on it we suggest the weak tracial Rokhlin property even for a compact group action. With the dual group action in mind, we also define the weak tracial approximate representability for a discrete group action. For finite group actions, a similar notion was suggested recently in \cite{Ali:2021} based on a local approximation approach. Here we employ the ultrapower algebra of a given algebra and define it in term of a mapping property namely tracially sequentially-split by order zero \cite{LO:Orderzero}.   We refer the reader to  \cite{Ali:2021}, \cite{GHS} for examples of such actions. It turns out that these two notions complement each other; when a finite group $G$ is abelian, the main result allows one to  observe the weak tracial Rokhlin property (approximate representability) of an action $\alpha:G\curvearrowright A$ by looking at the weak tracial approximate representability (Rokhlin property) of its dual $\widehat{\alpha}: \widehat{G} \curvearrowright A\rtimes_{\alpha} G$ (see Theorem \ref{T:dualityofgroupaction}), thus we extend the previous result \cite [Theorem 4.14]{LO:Dualities} or \cite [Theorem 3.11]{Phillips:tracial} in a more general context. We can interpret our result in the sense of Gardella, Hirshberg, and Santiago; \emph{tracially} $\dim_{\Rok}(\alpha)=0$ if and only if \emph{tracially} $\dim_{\rep}(\widehat{\alpha})=0$ and \emph{tracially} $\dim_{\rep}(\alpha)=0$ if and only if \emph{tracially} $\dim_{\Rok}(\widehat{\alpha})=0$ (see \cite [Theorem 2.14] {GHS}).    
We think that its proof of the main result is also worthwhile to mention since a systematic approach is used rather than an \emph{ad hoc} method; for a compact group action $\alpha:G\to \Aut(A)$,  it is shown that $\alpha$ has the weak tracial Rokhlin property if and only if the map $1_{C(G)}\otimes \id_A: (A, \alpha) \to (C(G)\otimes A, \sigma\otimes \alpha)$ is $G$-tracially sequentially-split by order zero (see Corollary \ref{C:thefirstfactorembeddingsplit}) and for a discrete group action $\beta:H\to \Aut(B)$,  it is shown that $\beta$ has the weak tracial approximate representability if and only if $\iota_B: (B,\beta) \to (B\rtimes_{\beta}H, \Ad \lambda^{\beta})$ is $H$-tracially sequentially-split by order zero (see Corollary \ref{C:approxrepandmap}). This new concept is defined by the author and H. Osaka in \cite{LO:Orderzero} to provide the working framework in showing that several tracial regularity properties related the Elliott classification program pass from $B$ to $A$ when there is a $*$-homomorphism from $A$ to $B$ extending \cite{LO:Tracial} and \cite{BS}. In fact, the central idea is to show that the equivariant map $\phi:(A, \alpha) \to (B, \beta)$ is $G$-tracially sequentially-split by order zero if and only if  the equivariant map $\widehat{\phi}: (A\rtimes_{\alpha}G, \widehat{\alpha}) \to (B\rtimes_{\beta}G, \widehat{\beta})$ is $\widehat{G}$-tracially sequentially-split by order zero (see Theorem \ref{T:dualityoftraciallysequentiallysplitmap}).

\section{Tracially sequentially-split by order zero}
In this section we briefly review several notions and related results from \cite{LO:Orderzero}. 

Throughout the paper, we fix a free ultrafilter $\omega$ on $\mathbb{N}$ and recall that  
$l^{\infty}(\mathbb{N}, A)$ denotes  the $C\sp*$-algebra of  all bounded functions from $\mathbb{N}$  to  $A$. We define a closed ideal of $l^{\infty}(\mathbb{N}, A)$ as follows: 
\[c_{\omega}(\mathbb{N}, A)=\{(a_n)_n \in l^{\infty}(\mathbb{N}, A)  \mid \lim_{\omega}\|a_n \|=0 \}. \]
Then we denote by $A_{\omega}=l^{\infty}(\mathbb{N}, A)/c_{\omega}(\mathbb{N}, A)$ the ultrapower $C\sp*$-algebra of $A$  with respect to the filter $\omega$ that is  equipped with the norm  $\|a\|= \lim_{\omega} \|a_n\| $ for $a=[(a_n)_n] \in A_{\omega}$. In addition, we denote by $\pi_{\omega}$ the canonical quotient map from $l^{\infty}(\mathbb{N}, A)$  onto $A_{\omega}$. Note that we can embed $A$ into $A_{\omega}$ as constant sequences denoted by $\iota$, and we call $A_{\omega} \cap A'$ the central ultrapower algebra of $A$. For an automorphism of $\alpha$ on $A$, we also denote by $\alpha_{\omega}$ the induced automorphism on $A_{\omega}$ or $A_{\omega}\cap A'$ without confusion.  

We save the notation $\lesssim$ for the Cuntz subequivalence of two positive elements; for two positive elements $a, b$ in A $a$ is said to be Cuntz subequivalent to $b$, denoted by  $a \lesssim b$,  if there is a sequence $(x_n)$ in $A$ such that $\| x_nbx_n^*-a\| \to 0$ as $n\to \infty$. Often when $p$ is a projection, we see that $p \lesssim a$ if and only if  there is a projection in the hereditary $C\sp*$-subalgebra generated by $a$  which is Murray-von Neumann equivalent to $p$. For more details, we refer \cite{Cu, Ro:UHF1, Ro:UHF2} for example. 

Let $A$ and $B$ be $C\sp*$-algebras and let $\phi:A \to B$ be a completely positive map. It is said that $\phi$ has order zero if $\phi(a)\phi(b)=\phi(b)\phi(a)=0$ for $a,b \in A^{+}$ such that $ab=ba=0$.  We shall abbreviate  
a completely positive map of order zero as an order zero map. The following is a well-known characterization of an order zero map. 
\begin{thm}(Winter and Zachrias)
Let $A$ and $B$ be $C\sp*$-algebras and $\phi:A \to B$ be an order zero map. Let $C=C^*(\phi(A)) \subset B$. Then there are a positive element $\displaystyle h \in \mc{M}(C) \cap C^{\prime}$ with $\|h\|=\|\phi\|$ and a $*$-homomorphism 
 \[\pi_{\phi}:A \to \mc{M}(C)\cap \{h\}'\] such that for $a\in A$
\[ \phi(a)=h\pi_{\phi}(a).\]
If $A$ is unital, then $h=\phi(1_A) \in C$. 
\end{thm}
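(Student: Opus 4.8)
The plan is to pass to a Stinespring dilation of $\phi$ and to distill from the order zero hypothesis one algebraic identity from which both $h$ and $\pi_{\phi}$ can be read off. First I would reduce to the unital case: if $A$ has no unit, extend $\phi$ to a completely positive order zero map on $\widetilde{A}$ with values in $\mc{M}(C)$ (using strict continuity on bounded sets), prove the statement there, and restrict. So assume $A$ is unital and represent $C \subset B \subset B(H)$ faithfully. Choosing a minimal Stinespring dilation $\phi(\cdot)=V^{*}\pi(\cdot)V$ with $\pi:A\to B(K)$ a unital representation, $V:H\to K$, and $\overline{\pi(A)VH}=K$, we have $h:=\phi(1_{A})=V^{*}V$, and the whole theorem is equivalent to the \emph{fundamental identity}
\[
\phi(a)\phi(b)=h\,\phi(ab)=\phi(ab)\,h \qquad (a,b\in A).
\]

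The main obstacle is establishing this identity, since it is exactly the point at which the order zero condition must be converted into multiplicativity up to $h$. I would first treat a single positive contraction $a$ and show $\phi(a)^{2}=\phi(a^{2})h$. For this, working in the commutative $C\sp*$-subalgebra $C\sp*(a)$, I would approximate $a$ by finite sums $\sum_i a_i$ of positive pieces $a_i=f_i(a)$ coming from a partition of unity $\sum_i f_i=\id$ on $\mathrm{sp}(a)$ whose members overlap only in adjacent indices, so that $a_ia_j=0$ whenever $|i-j|\ge 2$; the order zero property then forces $\phi(a_i)\phi(a_j)=0$ for such pairs, and letting the partition refine makes the surviving cross terms vanish in norm, yielding $\phi(a)^{2}=\phi(a^{2})h$ in the limit. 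Polarizing this identity, first with $a+b$ and then with $a+ib$ for self-adjoint $a,b$ and using linearity of $\phi$, upgrades it to the full two-variable identity displayed above; that $h=\phi(1_A)$ commutes with every $\phi(b)$ is then immediate from the cases $a=1_A$ and $b=1_A$.

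Granting the fundamental identity, the construction of $h$ and $\pi_{\phi}$ is formal. The identity with $a=1_A$ shows $h\phi(b)=\phi(b)h$, so $h\in C\cap C'$, and $h\in C$ since $A$ is unital. To build $\pi_{\phi}$ I would define, for each $a\in A$, a double centralizer of $C$ by the rules $\pi_{\phi}(a)\phi(b):=\phi(ab)$ and $\phi(b)\pi_{\phi}(a):=\phi(ba)$, extended to all of $C=C\sp*(\phi(A))$ by linearity and density; the fundamental identity guarantees these actions are well defined and bounded and that $\pi_{\phi}(a)\pi_{\phi}(b)=\pi_{\phi}(ab)$, while $\pi_{\phi}(a^{*})=\pi_{\phi}(a)^{*}$ and $h\pi_{\phi}(a)=\phi(a)$ follow directly. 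Since each $\pi_{\phi}(a)$ commutes with $h$, we obtain $\pi_{\phi}:A\to\mc{M}(C)\cap\{h\}'$. Finally $\|h\|=\|\phi(1_A)\|=\|\phi\|$ because a completely positive map attains its norm at the unit, completing the unital case; the non-unital reduction yields $h=\widetilde{\phi}(1)\in\mc{M}(C)$ with the same norm.
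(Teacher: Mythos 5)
The paper does not prove this statement at all---it is quoted verbatim from Winter--Zacharias \cite{WZ} as background---so your attempt has to be measured against the original proof. Your overall architecture matches it: everything reduces to the ``fundamental identity'' $\phi(a)\phi(b)=h\phi(ab)=\phi(ab)h$ with $h=\phi(1_A)$, after which $h\in C\cap C'$ and the construction of $\pi_{\phi}$ are essentially formal (though in \cite{WZ} $\pi_{\phi}$ is obtained as a strict limit of $g_n(h)\phi(\cdot)$ with $g_n(t)=\min(n,t^{-1})$ rather than via your double centralizers, which as written would need an a priori bound of the form $\|\phi(ab)\|\le\|a\|\,\|\phi(b)\|$ that you do not establish; the identity only controls $\|h\phi(ab)\|$). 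In \cite{WZ} the fundamental identity is exactly Wolff's theorem on disjointness-preserving self-adjoint maps and is invoked as a black box. You attempt to prove it, and that is where the genuine gap sits.

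Your partition-of-unity argument can be made to yield the one-variable identity $\phi(a)^2=h\phi(a^2)$ for self-adjoint $a$ (not quite for free: the roughly $n$ surviving adjacent cross terms are each only $O(1/n)$, so the naive triangle-inequality count gives $O(1)$; one must split into even and odd indices, where the summands become mutually orthogonal, to get the $o(1)$ you assert). The fatal step is the polarization. Substituting $a+b$ gives only the Jordan identity $\phi(a)\phi(b)+\phi(b)\phi(a)=h\phi(ab+ba)$, and substituting $a+ib$ gives nothing new: $a+ib$ is not self-adjoint, so the one-variable identity does not apply to it, and even formally $(a+ib)^2=a^2-b^2+i(ab+ba)$ reproduces the same symmetrized relation. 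The antisymmetric part $\phi(a)\phi(b)-\phi(b)\phi(a)=h\phi(ab-ba)$ is not recovered, so at best you obtain a Jordan $*$-map, not a $*$-homomorphism. This is not a presentational slip: your derivation of the fundamental identity uses only positivity and orthogonality preservation, never $2$-positivity, and the transpose map $\tau$ on $M_2$ is a unital, positive, orthogonality-preserving map for which every step you describe goes through, yet $\tau(x)\tau(y)=\tau(yx)\neq\tau(1)\tau(xy)$ in general. Complete positivity must therefore enter exactly at this point. One clean repair: from the Jordan identity your $\pi_{\phi}$ is unital, satisfies $\pi_{\phi}(a^2)=\pi_{\phi}(a)^2$ for self-adjoint $a$, and is completely positive (a point-strict limit of the c.p.\ maps $g_n(h)^{1/2}\phi(\cdot)g_n(h)^{1/2}$, the factors commuting with $C$); Choi's multiplicative-domain theorem then forces $\pi_{\phi}$ to be multiplicative. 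Without such an argument, or a direct appeal to Wolff's theorem, the proof is incomplete.
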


\begin{defn}\cite[Definition 3.1]{LO:Orderzero}\label{D:splitbyorderzero}
Let $A$ and $B$ be (unital) $C\sp*$-algebras. A $*$-homomorphism $\phi:A \to B$ is called tracially sequentially-split by order zero, if for every positive nonzero element $z \in A_{\omega}$  there exist an order zero map  $\psi: B \to A_{\omega}$ and a nonzero positive element $g\in A_{\omega}\cap A'$ such that 
\begin{enumerate}
\item $\psi(\phi(a))=ag$ for each $a\in A$, 
\item $1_{A_{\omega}} -g \lesssim z$ 
\end{enumerate}
\end{defn}

In reminiscence of  \cite{Lin:tracial, Lin:classification} we view $\psi\circ\phi$  equals to $\iota$ up to ``tracially small error''. The $\psi$ in the above definition is called a tracial approximate left-inverse of $\phi$. When both $\phi$ and $\psi$ are unital, then $g=\psi(1_B)$.  Although the diagram below is not commutative in a rigid sense, we still use it to symbolize that $\phi$ is tracially sequentially-split by order zero with its tracial approximate left inverse $\psi$;
\begin{equation*}\label{D:diagram}
\xymatrix{ A \ar[rd]_{\phi} \ar@{-->}[rr]^{\iota} && A_{\omega}. &\\
                          & B \ar[ur]_{\psi: \,\text{order zero.}} 
                          }
\end{equation*}
\begin{rem}
When the approximate tracial left inverse $\psi$ is  a $^*$-homomorphism, then we just say that $\phi$ is tracially sequentially-split \cite{LO:Dualities}. 
\end{rem}

The following proposition which is a tracial version of \cite[Lemma 2.4]{BS} will be used in Section \ref{S:tracial}. 
\begin{prop}\label{P:firstfactorembedding}
Let $C$ and $A$ be unital $C\sp*$-algebras.  Suppose that for any nonzero positive element $z\in A_{\omega}$ there exists an order zero  map $\phi: C \to A_{\omega}\cap A'$ such that  $1-\phi(1_C) \lesssim z$ in $A_{\omega}$.
Then the first factor embedding $\id_A\otimes 1_C:A \to A\otimes_{\max} C$ is tracially sequentially-split by order zero. Moreover, the converse is also true.
\end{prop}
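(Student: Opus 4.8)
The plan is to prove the two implications separately, in both directions passing through the Winter--Zacharias structure theorem to move between order zero maps and the $*$-homomorphisms supporting them.

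For the forward implication (hypothesis $\Rightarrow$ split), fix a nonzero positive $z \in A_\omega$ and let $\phi \colon C \to A_\omega \cap A'$ be the order zero map furnished by the hypothesis. First I would manufacture the tracial approximate left inverse by combining the constant embedding $\iota \colon A \to A_\omega$ with $\phi$. Since $\phi(C)$ lies in $A_\omega \cap A'$ it commutes with $\iota(A)$, so $(\iota, \phi)$ is a commuting pair consisting of a $*$-homomorphism and an order zero map; writing $\phi(c) = h\pi_\phi(c)$ via the structure theorem, with $h = \phi(1_C) \in A_\omega \cap A'$ and $\pi_\phi \colon C \to \mathcal{M}(D) \cap \{h\}'$ a $*$-homomorphism, the formula $\psi(a \otimes c) = \iota(a)\,h\,\pi_\phi(c)$ then defines an order zero map $\psi \colon A \otimes_{\max} C \to A_\omega$. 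Setting $g = \phi(1_C)$ (which we may take nonzero, the degenerate case $1 \lesssim z$ being handled by a scalar multiple of the unit), condition (1) of Definition~\ref{D:splitbyorderzero} is immediate from $\psi\bigl((\id_A \otimes 1_C)(a)\bigr) = \psi(a \otimes 1_C) = \iota(a)g = ag$, while condition (2) is exactly the hypothesis $1 - \phi(1_C) \lesssim z$.

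For the converse (split $\Rightarrow$ hypothesis), assume $\id_A \otimes 1_C$ is tracially sequentially-split by order zero and fix a nonzero positive $z \in A_\omega$. This yields an order zero map $\psi \colon A \otimes_{\max} C \to A_\omega$ and a nonzero positive $g \in A_\omega \cap A'$ with $\psi(a \otimes 1_C) = ag$ and $1 - g \lesssim z$. The natural candidate is $\phi(c) := \psi(1_A \otimes c)$, which is order zero as the restriction of $\psi$ to the subalgebra $1_A \otimes C \cong C$ and satisfies $\phi(1_C) = \psi(1_{A \otimes C}) = g$, so that $1 - \phi(1_C) = 1 - g \lesssim z$.

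The hard part will be verifying that $\phi(C) \subseteq A_\omega \cap A'$; here the order zero (rather than homomorphic) nature of $\psi$ is the obstacle, since $\psi(a \otimes 1_C) = ag$ carries the factor $g$ and is not simply $\iota(a)$. To handle this I would invoke the structure theorem again, writing $\psi = g\,\pi_\psi(\cdot)$ with $\pi_\psi \colon A \otimes_{\max} C \to \mathcal{M}(D) \cap \{g\}'$ a $*$-homomorphism, so that $g$ commutes with all of $\pi_\psi(A \otimes_{\max} C)$, while $g \in A_\omega \cap A'$ commutes with $\iota(A)$. Combining $ag = \psi(a \otimes 1_C) = g\,\pi_\psi(a \otimes 1_C)$ with $\iota(a)g = g\iota(a)$ gives the key relation $g\,\pi_\psi(a \otimes 1_C) = \iota(a)\,g$. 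Since $\pi_\psi$ is a homomorphism, $\pi_\psi(a \otimes 1_C)$ and $\pi_\psi(1_A \otimes c)$ commute (because $a \otimes 1_C$ and $1_A \otimes c$ commute), and one then computes that both $\iota(a)\phi(c)$ and $\phi(c)\iota(a)$ equal $g\,\pi_\psi(a \otimes c)$, yielding $[\iota(a), \phi(c)] = 0$ for all $a \in A$ and $c \in C$. This places $\phi(C)$ in the central ultrapower $A_\omega \cap A'$ and completes the verification; the main care needed throughout is tracking which side $g$ sits on when replacing $\pi_\psi(a \otimes 1_C)$ by $\iota(a)$.
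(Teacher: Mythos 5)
Your proposal is correct and follows essentially the same route as the paper: in the forward direction your map $\psi(a\otimes c)=\iota(a)h\pi_\phi(c)$ coincides with the paper's composition $m\circ(\id_A\otimes\phi)$ of the multiplication homomorphism with the tensored order zero map, and in the converse your use of the Winter--Zacharias decomposition $\psi=g\,\pi_\psi(\cdot)$, the commutativity of $a\otimes 1_C$ with $1_A\otimes c$, and the centrality of $g$ is exactly the paper's computation showing $\phi(c)a=a\phi(c)$. The only cosmetic difference is your explicit (and slightly hand-wavy) aside about the degenerate case $\phi(1_C)=0$, which the paper does not address either.
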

\begin{proof}
Let us denote by $m$ the map from $A \otimes_{\max} (A_{\omega}\cap A')$ to $A_{\omega}$ which is defined by 
\[ a\otimes [(a_n)] \to [(aa_n)].\] 
For any nonzero positive element $z$ in $A_{\omega}$  we take a map $\phi:C \to A_{\omega}$ as above and define $\psi$ as the composition of two maps $m$ and $\id_A \otimes \phi$, i.e., $\psi=m\circ (\id_A \otimes \phi)$. It follows immediately that $\psi(1_A \otimes 1_C)=\phi(1_C)$.
Then it is easily checked that
\begin{enumerate}
\item $(\psi \circ (\id_A\otimes 1_C)(a)=\psi(a\otimes 1_C)=a\phi(1_C)=a\psi(1_A\otimes 1_C)$ for any $a\in A$,
\item $1-\psi(1_A\otimes 1_C)=1-\phi(1_C) \lesssim z$.
\end{enumerate}

Conversely,  for any positive nonzero element $x$ in $A_{\omega}$ we consider a tracial approximate inverse $\psi$ for $\id_A\otimes 1_C$. Let $\phi(c)=\psi(1_A\otimes c)$ for $ c \in C$. Obviously, $\phi$ is an order zero map from $C$ to $A_{\omega}$.  Write $\psi (\cdot)= h_{\psi}\pi_{\psi}(\cdot)$. 
\[\begin{split}
 \phi(c)a&=\psi(1_A\otimes c)a=h_{\psi}\pi_{\psi}(1_A\otimes c) a \\
 &=\pi_{\psi}(1_A\otimes c)h_{\psi} \pi_{\psi}(1_A\otimes 1_C)a \\
 &=\pi_{\psi}(1_A\otimes c) \psi(1_A\otimes1_C)a \\
 &=\pi_{\psi}(1_A\otimes c)\psi(a\otimes 1_C)=\psi(a\otimes 1_C)\pi_{\psi}(1_A \otimes c)\\
 &=a\psi(1_A\otimes 1_C)\pi_{\psi}(1_A \otimes c)=a \phi(c).
  \end{split}
 \]
Moreover, $1-\phi(1_C)=1-\psi(1_A\otimes 1_C) \lesssim x$. Thus we obtain the order zero map $\phi$ from $C$ to $A_{\omega}\cap A'$ such that $ 1-\phi(1_C) \lesssim x$.  
\end{proof}
\begin{rem}
In fact, the corresponding statement for the second factor embedding $1_C\otimes \id_A: A \to C\otimes_{\max}A$ will be used in this note.  
\end{rem}

When $A$ and $B$ are equipped with group actions,  instead of ordinary $*$-homomorphisms we consider equivariant ones to define the equivariant version of Definition \ref{D:splitbyorderzero}.

\begin{defn}\label{D:equivorderzero}
Let A and B be separable unital $C\sp*$-algebras and $G$ a locally compact group. Let $\alpha:G \to \Aut(A)$ and $\beta:G \to \Aut(B)$ be  continuous actions. An equivariant $*$-homomorphism $\phi : (A,\alpha) \to (B,\beta)$ is called  $G$-tracially sequentially-split by order zero if for every nonzero positive element $x$ in $A_{\omega}$ there exist an equivariant order zero map $\psi:(B,\beta) \to (A_{\omega}, \alpha_{\omega})$ and a positive element $g$ in $A_{\omega}\cap A'$ such that 
\begin{enumerate}
\item $\psi (\phi (a))=ga=ag$,  
\item $1_{A_{\omega}}-g \lesssim x$.
\end{enumerate} 
\end{defn}

 We shall use the following diagram to describe the equivariant map $\phi$ which is tracially sequentially-split by order zero and its tracial approximate left inverse $\psi$; 
\begin{equation*}\label{D:diagram}
\xymatrix{ (A, \alpha) \ar[rd]_{\phi} \ar@{-->}[rr]^{\iota} && (A_{\omega}, \alpha_{\omega}). &\\
                          & (B, \beta) \ar[ur]_{\psi: \, \text{order zero.}} 
                          } 
\end{equation*}
\begin{rem}
Again, if the tracial approximate left inverse  $\psi$ is an equivariant $^*$-homomorphism, then we say that $\phi$ is $G$-tracially sequentially-split. 
\end{rem}
The following is a straightforward generalization of Proposition \ref{P:firstfactorembedding} to the equivariant case. 
\begin{prop}\label{P:equivariantfirstfactorembedding}
Let $C$ and $A$ be unital $C\sp*$-algebras.  Let $\alpha:G \to \Aut(A)$ and $\beta:G \to \Aut(B)$ be two continuous actions of a compact group $G$. Suppose that for any nonzero positive element $x\in A_{\omega}$ there exists an equivariant orderr zero map $\phi: (C,\beta) \to (A_{\omega}\cap A', \alpha_{\omega})$ such that $1-\phi(1_C) \lesssim x$. Then the first factor embedding $\id_A\otimes 1_C:(A, \alpha) \to (A\otimes C, \alpha\otimes \beta)$ is tracially sequentially-split by order zero. Moreover, the converse is also true.
\end{prop}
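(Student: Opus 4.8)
The plan is to imitate the proof of the non-equivariant Proposition~\ref{P:firstfactorembedding} essentially verbatim, checking at each step that every map and every element we construct respects the group action. The statement is labelled a ``straightforward generalization,'' so the substance is in verifying equivariance rather than inventing new ideas.

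First I would prove the forward direction. Given a nonzero positive $x \in A_\omega$, the hypothesis furnishes an equivariant order zero map $\phi:(C,\beta)\to(A_\omega\cap A',\alpha_\omega)$ with $1-\phi(1_C)\lesssim x$. Exactly as before, I would define $m:A\otimes_{\max}(A_\omega\cap A')\to A_\omega$ by $a\otimes[(a_n)]\mapsto[(aa_n)]$ and set $\psi = m\circ(\id_A\otimes\phi)$, so that $\psi(a\otimes c)=a\,\phi(c)$. The two algebraic conditions $\psi((\id_A\otimes 1_C)(a))=a\,\psi(1_A\otimes 1_C)$ and $1-\psi(1_A\otimes 1_C)\lesssim x$ follow as in Proposition~\ref{P:firstfactorembedding}. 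The new point is equivariance: I must check that $\psi$ intertwines $\alpha\otimes\beta$ on $A\otimes C$ with $\alpha_\omega$ on $A_\omega$. Since $m$ intertwines $\alpha\otimes\alpha_\omega$ with $\alpha_\omega$ (because $\alpha_{g,\omega}([(aa_n)])=[(\alpha_g(a)\alpha_{g,\omega}(a_n))]$) and $\id_A\otimes\phi$ is equivariant for $\alpha\otimes\beta$ into $\alpha\otimes\alpha_\omega$ by the equivariance of $\phi$, the composition $\psi$ is equivariant; hence $\psi$ is the required equivariant tracial approximate left inverse of the equivariant embedding $\id_A\otimes 1_C$.

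For the converse, I would start from an equivariant tracial approximate left inverse $\psi$ of $\id_A\otimes 1_C$ for a given nonzero positive $x\in A_\omega$ and set $\phi(c)=\psi(1_A\otimes c)$. That $\phi$ is order zero and that $1-\phi(1_C)\lesssim x$ is immediate, and the centrality computation showing $\phi(c)a=a\phi(c)$ (so that $\phi$ lands in $A_\omega\cap A'$) carries over line for line using the Winter--Zacharias decomposition $\psi(\cdot)=h_\psi\pi_\psi(\cdot)$. Here equivariance is inherited directly: since $1_A\otimes c\mapsto 1_A\otimes\beta_g(c)$ under $\alpha\otimes\beta$ and $\psi$ intertwines $\alpha\otimes\beta$ with $\alpha_\omega$, we get $\alpha_{g,\omega}(\phi(c))=\alpha_{g,\omega}(\psi(1_A\otimes c))=\psi(1_A\otimes\beta_g(c))=\phi(\beta_g(c))$, so $\phi:(C,\beta)\to(A_\omega\cap A',\alpha_\omega)$ is equivariant.

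I do not expect any genuine obstacle here, since the algebraic core is identical to Proposition~\ref{P:firstfactorembedding}; the only thing requiring care is the bookkeeping of the actions, and in particular confirming that $\alpha_\omega$ on the fixed central subalgebra $A_\omega\cap A'$ is well defined and that $m$ is genuinely $(\alpha\otimes\alpha_\omega,\alpha_\omega)$-equivariant. A minor subtlety worth flagging is that the compactness (indeed continuity) of the $G$-action is what guarantees $\alpha_\omega$ is a continuous action on the ultrapower so that the phrase ``equivariant order zero map into $(A_\omega\cap A',\alpha_\omega)$'' is meaningful; but since this is already built into Definition~\ref{D:equivorderzero}, the proof can proceed by simply remarking that each construction in the proof of Proposition~\ref{P:firstfactorembedding} is manifestly equivariant and referring the reader there for the non-equivariant estimates.
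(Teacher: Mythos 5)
Your proposal is correct and follows exactly the route the paper takes: the paper's own proof simply says the argument is the same as Proposition~\ref{P:firstfactorembedding} once one checks that the multiplication map $m$ is equivariant, which is precisely the verification you carry out (along with the equally routine equivariance of $\phi(c)=\psi(1_A\otimes c)$ in the converse). No gaps.
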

\begin{proof}
The proof is almost same with Proposition \ref{P:firstfactorembedding}, the only thing to be careful is that the map $m: (A_{\omega}\cap A')\otimes A \to A_{\omega}$ in Proposition \ref{P:firstfactorembedding} is equivariant with respect to actions and this is easily checked. 
\end{proof} 
We can extend an order zero map between $C\sp*$-algebras to an order zero map between crossed products if it is equivariant with respect to group actions.   
\begin{lem}\cite[Lemma 2.5]{GHS}\cite[Proposition 4.6]{LO:Orderzero}\label{L:extension}
Let $G$ be a locally compact group, $A$ and $B$ be $C\sp*$-algebras, and $\alpha:G \to \Aut(A)$ and $\beta:G \to \Aut(B)$ continuous actions.  Given an equivariant  contractive order zero map $\phi: A \to B$, the expression 
\[(\phi\rtimes G) (\xi)(g)=\phi(\xi(g))\]
for $\xi \in L^1(G, A, \alpha)$ and $g \in G$, determines a contractive order zero map from $A\rtimes_{\alpha}G$ to $B\rtimes_{\beta}G$. 
\end{lem}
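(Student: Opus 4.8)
The plan is to reduce the statement to the functoriality of the full crossed product for equivariant $*$-homomorphisms, using the Winter--Zacharias correspondence that turns an order zero map into a genuine $*$-homomorphism out of a cone. Recall that a contractive order zero map $\phi\colon A\to B$ is the same datum as a $*$-homomorphism $\rho\colon C_0((0,1])\otimes A\to B$ via $\phi(a)=\rho(j\otimes a)$, where $j\in C_0((0,1])$ is the function $j(t)=t$; this is just a reformulation of the structure theorem stated above, with $\rho(f\otimes a)=f(h)\pi_\phi(a)$. I would carry the whole argument at the level of this $\rho$ and only read off the order zero map at the very end, which makes contractivity and the order zero property automatic.

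First I would equip the cone $C_0((0,1])\otimes A$ with the action $\id\otimes\alpha$ (trivial on the cone factor) and check that $\rho$ is equivariant from $(C_0((0,1])\otimes A,\id\otimes\alpha)$ to $(B,\beta)$. It suffices to verify this on the generators $j\otimes a$, where it reads $\rho(j\otimes\alpha_g(a))=\phi(\alpha_g(a))=\beta_g(\phi(a))=\beta_g(\rho(j\otimes a))$ by the hypothesised equivariance of $\phi$; since these generate the cone as a $C\sp*$-algebra and all the maps in sight are $*$-homomorphisms or automorphisms, equivariance propagates to the whole cone.

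Next I would apply the crossed-product functor. Functoriality of the full crossed product for equivariant $*$-homomorphisms yields a $*$-homomorphism $\rho\rtimes G\colon (C_0((0,1])\otimes A)\rtimes_{\id\otimes\alpha}G\to B\rtimes_\beta G$, given on $L^1(G,\,\cdot\,)$ by applying $\rho$ pointwise. Because the action on the cone factor is trivial and $C_0((0,1])$ is nuclear, the standard identification $(C_0((0,1])\otimes A)\rtimes_{\id\otimes\alpha}G\cong C_0((0,1])\otimes (A\rtimes_\alpha G)$ holds, i.e.\ the crossed product of a cone is the cone of the crossed product. Composing, I obtain a $*$-homomorphism $\Theta\colon C_0((0,1])\otimes(A\rtimes_\alpha G)\to B\rtimes_\beta G$, and the Winter--Zacharias correspondence, read in reverse, turns $\Theta$ back into a contractive order zero map $\phi\rtimes G\colon A\rtimes_\alpha G\to B\rtimes_\beta G$ via $(\phi\rtimes G)(x)=\Theta(j\otimes x)$.

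Finally I would verify that this $\phi\rtimes G$ is given by the claimed pointwise formula. Tracing an element $\xi\in L^1(G,A,\alpha)$ through the identifications, $j\otimes\xi$ corresponds to the function $g\mapsto j\otimes\xi(g)$ in $L^1(G,C_0((0,1])\otimes A)$, and applying $\rho\rtimes G$ gives $g\mapsto\rho(j\otimes\xi(g))=\phi(\xi(g))$; hence $(\phi\rtimes G)(\xi)(g)=\phi(\xi(g))$ on the dense $*$-subalgebra $L^1(G,A,\alpha)$, which determines the map. The only genuinely delicate point is the identification of the crossed product of the cone with the cone of the crossed product for an arbitrary locally compact $G$; this is where the nuclearity of $C_0((0,1])$ (so that the tensor factor is preserved and the trivial action distributes over it) is essential. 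Everything else---equivariance of $\rho$, functoriality, and the final pointwise computation---is a routine check on the dense $*$-subalgebras $L^1(G,\,\cdot\,)$.
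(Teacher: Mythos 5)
The paper does not actually prove this lemma---it is quoted from \cite[Lemma 2.5]{GHS} and \cite[Proposition 4.6]{LO:Orderzero} without an in-text argument---and your proof via the Winter--Zacharias cone correspondence (equivariance of the associated homomorphism $\rho\colon C_0((0,1])\otimes A\to B$ checked on the generators $j\otimes a$, functoriality of the crossed product, and the identification of the crossed product of the cone with the cone of the crossed product) is precisely the standard argument used in those references. The one genuinely delicate step, the tensor identity $(C_0((0,1])\otimes A)\rtimes_{\id\otimes\alpha}G\cong C_0((0,1])\otimes(A\rtimes_\alpha G)$, is correctly identified and correctly disposed of using that the cone carries the trivial action and is nuclear, so the proposal is complete and correct.
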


\section{Dualities of actions}\label{S:tracial}

Let us recall the following definition from \cite{HO} which is a generalization of  Phillips' tracial Rokhlin property of a finite group action in terms of positive contractions.    

\begin{defn}(Hirshberg and Orovitz)
Let $G$ be a finite group and $A$ be a simple infinite dimensional separable unital $C\sp*$-algebra. It is said  that $\alpha:G\to\Aut(A)$ has the generalized tracial Rokhlin property if  for every finite set $F \subset A$, every $\epsilon>0$, any nonzero positive element $x\in A$ there exist normalized positive contractions $\{e_g\}_{g\in G} \subset A$ such that 
\begin{enumerate}
\item $e_g \perp e_h$ when $g\neq h$,
\item $\| \alpha_g(e_h)-e_{gh} \| \le \epsilon$, \quad  for all $g, h \in g$,
\item $\|  e_ga -ae_g \| \le \epsilon$, \quad for all $g \in G, a \in F$,
\item  $1_A-\sum_{g} e_g \lesssim x$. 
\end{enumerate}
\end{defn}

We can reformulate the generalized tracial Rokhlin property of $$\alpha:G\to\Aut(A)$$ with exact relations using the ultrapower algebra. 

\begin{prop}\label{T:tracialRokhlinaction}
Let $G$ be a finite group and $A$ be a simple infinite dimensional separable unital $C\sp*$-algebra. Then $\alpha:G\to\Aut(A)$ has the generalized tracial Rokhlin property if  and only if for any nonzero positive element $x \in A_{\omega}$ there exist a mutually orthogonal  positive contractions $e_g$'s in $A_{\omega}\cap A'$ such that 
\begin{enumerate}
\item $\alpha_{\omega, g}(e_h)=e_{gh}$, \quad for all $g,h\in G$ where $\alpha_{\omega}:G\to\Aut(A_{\omega}) $ is the induced action,  
\item 
$1_{A_{\omega}}-\sum_{g\in G} e_g \lesssim x$.
\end{enumerate}
\end{prop}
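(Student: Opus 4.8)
The plan is to prove both implications by the standard reindexation technique relating the local formulation (quantified over finite sets $F$, tolerances $\epsilon$, and elements $x\in A$) to the ultrapower formulation, the only genuine subtlety being that Cuntz comparison behaves differently in the fibres than in $A_{\omega}$. First I would record the comparison criterion to be used repeatedly: for bounded sequences of positive elements $(a_n)$, $(b_n)$ in $A$, one has $[(a_n)]\lesssim[(b_n)]$ in $A_{\omega}$ if and only if for every $\epsilon>0$ there is $\delta>0$ with $(a_n-\epsilon)_+\lesssim(b_n-\delta)_+$ for $\omega$-almost all $n$. This follows from R{\o}rdam's cut-down lemmas by choosing norm-bounded Cuntz witnesses, and the point is that the margin $\delta$ must be \emph{uniform} in $n$; a naive fibrewise subequivalence, with witnesses blowing up, is neither necessary nor sufficient for subequivalence in $A_{\omega}$.

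For the forward implication, I fix an increasing sequence of finite sets $F_1\subseteq F_2\subseteq\cdots$ whose union is dense in the separable algebra $A$. Given a nonzero positive $x=[(x_n)]\in A_{\omega}$ with positive representatives, I choose $\eta>0$ and $S\in\omega$ so that $\|x_n\|>\eta$ for $n\in S$, and set $y_n=(x_n-\eta/2)_+$, a nonzero positive element of $A$ for $n\in S$. For each $n\in S$ I apply the generalized tracial Rokhlin property to the data $(F_n,1/n,y_n)$, obtaining normalized positive contractions $\{e_g^{(n)}\}_{g\in G}$, set $e_g^{(n)}=0$ off $S$, and define $e_g=[(e_g^{(n)})]\in A_{\omega}$. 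Pairwise orthogonality is exact in each fibre, so $e_ge_h=0$; the commutation estimates against $F_n$ together with density of $\bigcup_n F_n$ force $e_g\in A_{\omega}\cap A'$; and the equivariance estimates with error $1/n\to 0$ give $\alpha_{\omega,g}(e_h)=e_{gh}$. Finally, $1_A-\sum_g e_g^{(n)}\lesssim y_n=(x_n-\eta/2)_+$ yields, for every $\epsilon>0$, $\big((1_A-\sum_g e_g^{(n)})-\epsilon\big)_+\lesssim(x_n-\eta/2)_+$; this is exactly the criterion with the \emph{uniform} margin $\delta=\eta/2$, so $1_{A_{\omega}}-\sum_g e_g\lesssim x$.

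For the reverse implication, let $F\subseteq A$ be finite, $\epsilon>0$, and $x\in A$ nonzero positive. I apply the hypothesis to the constant element $\iota(x)\in A_{\omega}$, obtaining pairwise orthogonal central contractions $e_g$ with $\alpha_{\omega,g}(e_h)=e_{gh}$ and $1-\sum_g e_g\lesssim\iota(x)$. Since the universal $C^*$-algebra on finitely many pairwise orthogonal positive contractions is $\bigoplus_g C_0((0,1])$, which is projective, the tuple $(e_g)$ lifts to a tuple of \emph{fibrewise} pairwise orthogonal positive contractions $(e_g^{(n)})_n$. Writing $E_n=\sum_g e_g^{(n)}$, orthogonality gives $e_g^{(n)}E_n=(e_g^{(n)})^2=E_ne_g^{(n)}$, so I may \emph{fatten} the tower by $\tilde e_g^{(n)}=e_g^{(n)}c(E_n)$, where $c(t)=\min(1/t,2)$, chosen so that $\sum_g\tilde e_g^{(n)}=\psi(E_n)$ with $\psi(t)=\min(2t,1)$. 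The $\tilde e_g^{(n)}$ are again pairwise orthogonal positive contractions, and $1_A-\sum_g\tilde e_g^{(n)}=(1-2E_n)_+=2(\tfrac12-E_n)_+\lesssim\big((1_A-E_n)-\tfrac12\big)_+$; applying the easy direction of the criterion to $1-E\lesssim\iota(x)$ with $\epsilon=\tfrac12$ then gives $1_A-\sum_g\tilde e_g^{(n)}\lesssim x$ for $\omega$-almost all $n$. The approximate equivariance $\|\alpha_g(\tilde e_h^{(n)})-\tilde e_{gh}^{(n)}\|$ and the approximate commutation $\|\tilde e_g^{(n)}a-a\tilde e_g^{(n)}\|$, $a\in F$, tend to $0$ along $\omega$, since $\alpha_g(E_n)\to E_n$ and $E_n$ asymptotically commutes with $F$ (the exact relations holding for $e_g$ in $A_{\omega}$), while $c,\psi$ are continuous; intersecting the finitely many $\omega$-large sets where these quantities fall below $\epsilon$ with the set where the comparison holds, I pick any index $n$ in the intersection, and, after a harmless normalization of the nonzero $\tilde e_g^{(n)}$ to norm one, this fibre furnishes the required data.

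I expect the Cuntz comparison step to be the main obstacle. Fibrewise subequivalence and subequivalence in $A_{\omega}$ diverge unless the comparison margin is controlled uniformly in $n$; in the forward direction this uniformity is supplied for free by passing from $x_n$ to $(x_n-\eta/2)_+$, whereas in the reverse direction the fattening function $\psi$ is precisely what manufactures a uniform margin, so that an ultrapower comparison against $\iota(x)$ descends to an \emph{exact} comparison $1_A-\sum_g\tilde e_g^{(n)}\lesssim x$ in a single fibre. The remaining ingredients --- exact orthogonality via projectivity of cones, and centrality and equivariance via the usual density-and-$\epsilon$ reindexing --- are routine by comparison.
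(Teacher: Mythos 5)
Your proof is correct and follows the route the paper intends: the paper's own ``proof'' of this proposition is a one-line deferral (``the arguments are almost the same as \cite[Theorem 3.3]{LO:Tracial}, replacing projections with positive contractions''), and what that phrase conceals is exactly the reindexation argument you carry out. The two points you isolate are precisely the ones that make the passage from projections to positive contractions nontrivial and that the paper leaves unsaid: the uniform-margin characterization of Cuntz subequivalence in $A_{\omega}$ (supplied for free by cutting $x_n$ down to $(x_n-\eta/2)_+$ in the forward direction, and manufactured by the fattening $\psi(t)=\min(2t,1)$ in the reverse direction), and exact fibrewise orthogonality via projectivity of $\bigoplus_{g}C_0((0,1])$. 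One small caveat on the reverse direction: your final normalization to norm-one contractions presupposes that the $e_g$ furnished by the ultrapower hypothesis are nonzero, whereas the proposition as stated does not require this; in the degenerate case where all $e_g=0$ (which forces $1\lesssim x$) your argument, like the statement itself, gives nothing. This is an imprecision inherited from the statement rather than a flaw in your reasoning --- and it is worth recording, as you implicitly do, that exact equivariance forces all $\|e_g\|$ to coincide, so nonvanishing of a single tower element is enough to run the normalization.
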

\begin{proof}
The arguments are almost same with \cite[Theorem 3.3]{LO:Tracial} replacing projections with positive contractions. 
\end{proof}

We recall that the strict Rokhlin property of $\alpha: G \to \Aut(A)$ for a compact group $G$ can be rephrased in term of  the existence of a mapping  from $C(G)$ to $A_{\infty}$ the sequence algebra of $A$.   

\begin{defn}(cf. \cite{BS}, \cite{HW:Rokhlin})\label{D:Rokhlin}
Let $A$ be a separable unital $C\sp*$-algebra and $G$ a second countable, compact group. Let $\sigma:G\to\Aut(C(G))$ denote the canonical $G$-shift, that is, $\sigma_g(f) = f(g^{-1} \cdot)$ for all $f \in C(G)$ and $g\in G$. A continuous action $\alpha:G\to\Aut(A)$ is said to have the Rokhlin property if there exists a unital equivariant $*$-homomorphism
\[(C(G), \sigma)\to (A_{\infty}\cap A' , \alpha_{\infty}).\]
\end{defn}

The same perspective holds even for the case that $\alpha$ has the tracial Rokhlin property. But we emphasize that the target must be changed into the ultrapower $A_{\omega}$ since we don't want the tracial case to be reduced to the strict case (see \cite{LO:Orderzero} for more details). 

\begin{thm}\label{T:tracialRokhlinaction2}
Let $G$ be a finite group and $A$ a separable simple unital infinite dimensional $C\sp*$-algebra. Then $\alpha$ has the generalized tracial Rokhlin property if  and only if for every nonzero positive element $x$ in $A_{\omega}$ there exists a contractive equivariant order zero map $\phi$ from $(C(G), \sigma)$ to $(A_{\omega}\cap A', \alpha_{\omega})$ such that $1-\phi(1_{C(G)}) \lesssim x$ in $A_{\omega}$.
\end{thm}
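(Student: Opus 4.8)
The plan is to exploit the reformulation of the generalized tracial Rokhlin property given in Proposition \ref{T:tracialRokhlinaction} together with the elementary structure of order zero maps out of the finite-dimensional commutative algebra $C(G)$. Since $G$ is finite, $C(G) \cong \mathbb{C}^{G}$, whose minimal projections are the point masses $\{\delta_g\}_{g \in G}$; these satisfy $\delta_g \delta_h = 0$ for $g \neq h$, $\sum_{g \in G} \delta_g = 1_{C(G)}$, and, from the formula $\sigma_g(f) = f(g^{-1}\cdot)$, the relation $\sigma_g(\delta_h) = \delta_{gh}$. The key observation is that a contractive order zero map $\phi : C(G) \to A_{\omega}\cap A'$ is completely determined by the tuple $(e_g)_{g\in G}$ with $e_g := \phi(\delta_g)$, and that this sets up a dictionary between such maps and the Rokhlin data of Proposition \ref{T:tracialRokhlinaction}.

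First I would run the forward direction of the dictionary. Given a contractive equivariant order zero map $\phi$, put $e_g = \phi(\delta_g)$. Because $\phi$ has order zero and $\delta_g \perp \delta_h$ for $g \neq h$, the $e_g$ are mutually orthogonal positive elements of $A_{\omega}\cap A'$. Since the domain is unital, the Winter--Zacharias theorem gives $\|\phi\| = \|\phi(1_{C(G)})\| = \|\sum_g e_g\| = \max_g \|e_g\|$ (the last equality by orthogonality), so contractivity of $\phi$ is exactly the statement that each $e_g$ is a contraction. Equivariance applied to $\sigma_g(\delta_h) = \delta_{gh}$ yields $\alpha_{\omega, g}(e_h) = e_{gh}$, and $\phi(1_{C(G)}) = \sum_{g\in G} e_g$, so the hypothesis $1 - \phi(1_{C(G)}) \lesssim x$ becomes $1_{A_{\omega}} - \sum_{g} e_g \lesssim x$. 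These are precisely conditions (1) and (2) of Proposition \ref{T:tracialRokhlinaction}.

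Conversely, I would start from mutually orthogonal positive contractions $(e_g)_{g\in G}$ in $A_{\omega}\cap A'$ satisfying $\alpha_{\omega,g}(e_h) = e_{gh}$ and $1_{A_{\omega}} - \sum_g e_g \lesssim x$, and define $\phi\bigl(\sum_g c_g \delta_g\bigr) = \sum_g c_g e_g$. Because the domain is commutative this map is automatically completely positive; orthogonality of the $e_g$ makes it order zero and, together with $\|e_g\|\le 1$, contractive; and the relation $\alpha_{\omega,g}(e_h) = e_{gh}$ extends by linearity to equivariance of $\phi$ with respect to $\sigma$ and $\alpha_{\omega}$. Then $1 - \phi(1_{C(G)}) = 1 - \sum_g e_g \lesssim x$, as required. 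Combining the two directions, the existence for every nonzero positive $x \in A_{\omega}$ of a contractive equivariant order zero map $\phi$ with $1 - \phi(1_{C(G)}) \lesssim x$ is equivalent to the existence of Rokhlin elements $(e_g)$ as in Proposition \ref{T:tracialRokhlinaction}, which is in turn equivalent to the generalized tracial Rokhlin property.

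There is no deep obstacle here; the argument is essentially a translation. The only points requiring a little care are verifying that the induced action $\alpha_{\omega}$ genuinely restricts to an action on the central subalgebra $A_{\omega}\cap A'$, so that both sides of the correspondence live in the same object, and confirming that \emph{every} contractive order zero map out of $C(G)$ arises from an orthogonal tuple, i.e.\ that $\phi \leftrightarrow (e_g)$ is a genuine bijection. Both follow from the Winter--Zacharias structure theorem and from $C(G)$ being commutative and finite-dimensional, so the work lies entirely in bookkeeping rather than in any analytic difficulty.
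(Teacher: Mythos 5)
Your proposal is correct and follows essentially the same route as the paper: both directions are mediated by Proposition \ref{T:tracialRokhlinaction}, with the dictionary $e_g=\phi(\delta_g)$ in one direction and $\phi(f)=\sum_g f(g)e_g$ in the other, using orthogonality of the $\delta_g$, the identity $\sigma_g(\delta_h)=\delta_{gh}$, and $\phi(1_{C(G)})=\sum_g e_g$ exactly as the paper does. The only cosmetic difference is that you spell out the contractivity bookkeeping via the Winter--Zacharias structure theorem slightly more explicitly.
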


\begin{proof}
``$\Longrightarrow$'': By Proposition \ref{T:tracialRokhlinaction}, for any nonzero positive $x\in A_{\omega}$ we can take mutually orthogonal positive contractions $e_g$'s in $A_{\omega}\cap A'$ such that $1-\sum_{g\in G} e_g \precsim x$. Then we define $\phi(f)=\sum_g f(g)e_g$ for $f\in C(G)$. Note that 
\[ \begin{split}
\phi(\bar{f} f) &= \sum_{g} \bar{f}(g) f(g) e_{g}\\
&=\left( \sum_{g\in G}f(g)e_{g}^{1/2} \right)^*\left(\sum_{h\in G} f(h)e_{h}^{1/2}\right)\geq 0
\end{split}\] since $e_he_g=0$ for $g\neq h$, and thus $\phi$ is positive. In addition, $\phi$ is completely positive since $C(G)$ is an abelian $C\sp*$-algebra. To show $\phi$ is order zero, suppose $f_1 \perp f_2$ in $C(G)$. Then again using the orthogonality of $e_g$'s
\[ 
\begin{split}
\phi(f_1) \phi(f_2) &=\left(\sum_{g} f_1(g)e_g\right)\left(\sum_h f_2(h) e_h\right)\\
&=\left( \sum_{g} f_1(g)f_2(g) e_g ^2 \right)=0
\end{split}
\]
Since $\{\delta_g\mid g\in G\}$ is a basis for $C(G)$, to show that  $\phi$ is equivarian  it is enough to check
\[ 
\phi(\sigma_g(\delta_h(\cdot)))=\phi(\delta_{gh}(\cdot))=e_{gh}=\alpha_{\omega, g}(e_h)=\alpha_{\omega, g}(\phi(\delta_h)).
\]
Finally, $1- \phi (1_{C(G)})= 1- \sum_{g\in G} e_g \lesssim x$. 

``$\Longleftarrow$'': Let $x$ be a nonzero positive element in $A_{\omega}$ and suppose that we have an equivariant order zero map $\phi:(C(G),\sigma) \to (A_{\omega}\cap A', \alpha_{\omega})$. Let $\delta_g$ be the characteristic function on a singleton $g$. Then for each $g\in G$ $e_g=\phi(\delta_{g})$ is a positive contraction in $A_{\omega}\cap A'$ such that $e_g \perp e_h$ for $g\neq h$ and $1-\sum_g e_g=1-\phi(1_{C(G)})\precsim x$. Moreover,  
\[\alpha_{\omega, g}(e_h)=\alpha_{\omega, g}(\phi(\delta_h))=\phi(\sigma_g (\delta_h))=\phi(\delta_{gh})=e_{gh},\] so we are done. 
\end{proof}

The following is a final form to view the generalized  tracial Rokhlin property of $\alpha:G\to\Aut(A)$. 
\begin{cor}\label{C:tracialRokhlinviasequentiallysplitmap}
Let $G$ be a finite group and $\alpha:G\to\Aut(A)$ an action of $G$ on a simple separable unital infinite dimensional $C\sp*$-algebra $A$. Then $\alpha$ has the generalized tracial Rokhlin property if and only if the map $1_{C(G)}\otimes \id_A : (A, \alpha) \to (C(G)\otimes A, \sigma \otimes \alpha)$ is $G$-tracially sequentially split by order zero. 
\end{cor}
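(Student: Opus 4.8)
The plan is to derive Corollary \ref{C:tracialRokhlinviasequentiallysplitmap} by combining the two results immediately preceding it, namely Theorem \ref{T:tracialRokhlinaction2} (which characterizes the generalized tracial Rokhlin property in terms of the existence of an equivariant order zero map $\phi\colon (C(G),\sigma) \to (A_\omega \cap A', \alpha_\omega)$ with $1-\phi(1_{C(G)}) \lesssim x$) and the equivariant first-factor embedding result. The key observation is that these two statements share exactly the same hypothesis: the existence, for every nonzero positive $x \in A_\omega$, of an equivariant order zero map from $(C(G),\sigma)$ into the central ultrapower $(A_\omega \cap A', \alpha_\omega)$ satisfying the Cuntz-subequivalence condition $1-\phi(1_{C(G)}) \lesssim x$. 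Thus the corollary is essentially a transitivity statement chaining these equivalences together.

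First I would invoke Theorem \ref{T:tracialRokhlinaction2} to replace the generalized tracial Rokhlin property of $\alpha$ with the condition that for every nonzero positive $x \in A_\omega$ there is a contractive equivariant order zero map $\phi\colon (C(G),\sigma) \to (A_\omega \cap A', \alpha_\omega)$ with $1-\phi(1_{C(G)}) \lesssim x$. Next I would apply the equivariant version of the first-factor embedding criterion (Proposition \ref{P:equivariantfirstfactorembedding}, or rather its second-factor analogue noted in the remark) with the roles $C \rightsquigarrow C(G)$ and $\beta \rightsquigarrow \sigma$. That proposition says precisely that this order-zero-map condition holds if and only if the factor embedding $(A,\alpha) \to (C(G)\otimes A, \sigma\otimes\alpha)$ is $G$-tracially sequentially-split by order zero. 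Since the corollary concerns the \emph{second} factor embedding $1_{C(G)}\otimes \id_A$ rather than the first, I would use the version of Proposition \ref{P:equivariantfirstfactorembedding} for the second factor embedding referred to in the remark following it, noting that the flip isomorphism $C(G)\otimes A \cong A \otimes C(G)$ intertwines $\sigma\otimes\alpha$ and $\alpha\otimes\sigma$ and carries one factor embedding to the other, so the two formulations are equivalent.

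The main subtlety, rather than any deep obstacle, is bookkeeping about which factor is embedded and ensuring the actions match up correctly: one must check that $C(G)\otimes A$ with the action $\sigma\otimes\alpha$ and the embedding $a \mapsto 1_{C(G)}\otimes a$ is covered by the second-factor version of the proposition, and that no maximal-versus-minimal tensor product issue arises. Because $C(G)$ is nuclear (indeed abelian), the maximal and minimal tensor products coincide, so $C(G)\otimes_{\max} A = C(G)\otimes A$ and the tensor-product notation in Proposition \ref{P:equivariantfirstfactorembedding} causes no ambiguity here. I would therefore only need to remark on nuclearity once. Chaining the two equivalences then yields the desired biconditional, completing the proof in a few lines.

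\begin{proof}
By Theorem \ref{T:tracialRokhlinaction2}, $\alpha$ has the generalized tracial Rokhlin property if and only if for every nonzero positive element $x$ in $A_{\omega}$ there exists a contractive equivariant order zero map $\phi\colon (C(G),\sigma) \to (A_{\omega}\cap A', \alpha_{\omega})$ with $1-\phi(1_{C(G)}) \lesssim x$. Since $C(G)$ is abelian and hence nuclear, $C(G)\otimes_{\max} A = C(G)\otimes A$, so by the second-factor version of Proposition \ref{P:equivariantfirstfactorembedding} (applied with $C = C(G)$ and $\beta = \sigma$), the latter condition holds if and only if the second factor embedding $1_{C(G)}\otimes \id_A\colon (A,\alpha) \to (C(G)\otimes A, \sigma\otimes\alpha)$ is $G$-tracially sequentially-split by order zero. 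Combining these two equivalences gives the claim.
\end{proof}
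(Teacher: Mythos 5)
Your proposal is correct and follows exactly the route the paper takes: the paper's own proof is a one-line citation of Theorem \ref{T:tracialRokhlinaction2} together with Proposition \ref{P:equivariantfirstfactorembedding} (in its second-factor form, as flagged in the remark after Proposition \ref{P:firstfactorembedding}). Your additional remarks on the flip isomorphism and on nuclearity of $C(G)$ removing any maximal-versus-minimal tensor product ambiguity are sound and, if anything, slightly more careful than the paper's own treatment.
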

\begin{proof}
It follows from Theorem \ref{T:tracialRokhlinaction2} and Proposition \ref{P:equivariantfirstfactorembedding}.
\end{proof}

In view of Definition \ref{D:Rokhlin} and Theorem \ref{T:tracialRokhlinaction2}, we propose the following definition of the weak tracial Rokhlin property for a compact group which can be interpreted as \emph{tracially} $\dim_{\Rok}(\alpha)=0$ (see \cite[Definition 2.3]{GHS} for the notation  $\dim_{\Rok}(\alpha)$).  
\begin{defn}\label{D:weaktracialRokhlin}
Let $G$ be a compact group, and $\alpha:G\to\Aut(A)$ an action of $G$ on a unital separable $C\sp*$-algebra $A$. Then we say that $\alpha$ has the weak tracial Rokhlin property if for every nonzero positive element $x$ in $A_{\omega}$ there exists a contractive equivariant order zero map $\phi$ from $(C(G), \sigma)$ to $(A_{\omega}\cap A', \alpha_{\omega})$ such that $1-\phi(1_{C(G)}) \lesssim x$ in $A_{\omega}$.
\end{defn}

\begin{cor} \label{C:thefirstfactorembeddingsplit}
Let $G$ be a compact group and $\alpha:G\to\Aut(A)$ an action of $G$ on a  unital separable $C\sp*$-algebra $A$. Then $\alpha$ has the weal tracial Rokhlin property if and only if the map $1_{C(G)}\otimes \id_A : (A, \alpha) \to (C(G)\otimes A, \sigma \otimes \alpha)$ is $G$-tracially sequentially split by order zero. 
\end{cor}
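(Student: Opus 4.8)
The plan is to reduce Corollary \ref{C:thefirstfactorembeddingsplit} to two results already established in the excerpt, so that essentially no new computation is required. The key observation is that Definition \ref{D:weaktracialRokhlin} of the weak tracial Rokhlin property was deliberately phrased to match exactly the hypothesis appearing in Proposition \ref{P:equivariantfirstfactorembedding} (in its second-factor form, as noted in the remark following Proposition \ref{P:firstfactorembedding}). Thus the strategy is to specialize the general duality between an approximately-splitting order zero map into the central ultrapower and the tracial sequential-splitting of a factor embedding to the particular case $C = C(G)$ equipped with the canonical shift $\sigma$.

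First I would unwind the definitions. By Definition \ref{D:weaktracialRokhlin}, $\alpha$ has the weak tracial Rokhlin property precisely when, for every nonzero positive $x \in A_\omega$, there is a contractive equivariant order zero map $\phi : (C(G), \sigma) \to (A_\omega \cap A', \alpha_\omega)$ with $1 - \phi(1_{C(G)}) \lesssim x$. This is verbatim the hypothesis of Proposition \ref{P:equivariantfirstfactorembedding} with the roles $C = C(G)$ and $\beta = \sigma$. Since $G$ is a compact group and $C(G)$ is a unital $C\sp*$-algebra, the standing assumptions of Proposition \ref{P:equivariantfirstfactorembedding} are met. I would therefore invoke that proposition directly: its conclusion (together with the stated converse) is that the existence of such a family of maps $\phi$ is equivalent to the first factor embedding being $G$-tracially sequentially split by order zero.

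The only point requiring care is the bookkeeping of which tensor factor carries which algebra and which action. Proposition \ref{P:equivariantfirstfactorembedding} is literally stated for the \emph{first} factor embedding $\id_A \otimes 1_C : (A,\alpha) \to (A \otimes C, \alpha \otimes \beta)$, whereas the corollary concerns the \emph{second} factor embedding $1_{C(G)} \otimes \id_A : (A,\alpha) \to (C(G) \otimes A, \sigma \otimes \alpha)$. As the remark after Proposition \ref{P:firstfactorembedding} explicitly records, the second-factor version holds by the identical argument; concretely, one uses the equivariant flip isomorphism $(C(G) \otimes A, \sigma \otimes \alpha) \cong (A \otimes C(G), \alpha \otimes \sigma)$, which intertwines the two embeddings and preserves the property of being $G$-tracially sequentially split by order zero. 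Composing an approximate left inverse with this flip converts one splitting into the other, so the two embeddings are simultaneously tracially sequentially split by order zero or not.

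Since essentially every step here is a citation, I do not expect a genuine obstacle. The one thing to verify carefully is that the equivariance in Proposition \ref{P:equivariantfirstfactorembedding} is the correct $G$-equivariance matching Definition \ref{D:equivorderzero}, namely that the multiplication map $m$ intertwines $\alpha_\omega$ with $\alpha \otimes \sigma$ (equivalently $\sigma \otimes \alpha$) appropriately; this is exactly the point flagged in the proof of Proposition \ref{P:equivariantfirstfactorembedding} and is routine. Thus the proof reduces to the single line: combine Definition \ref{D:weaktracialRokhlin} with the second-factor form of Proposition \ref{P:equivariantfirstfactorembedding}.
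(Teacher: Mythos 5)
Your proposal is correct and follows exactly the paper's own argument, which likewise deduces the corollary by combining Definition \ref{D:weaktracialRokhlin} with Proposition \ref{P:equivariantfirstfactorembedding} (in its second-factor form). Your additional remark on the equivariant flip isomorphism just makes explicit a point the paper delegates to the remark following Proposition \ref{P:firstfactorembedding}.
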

\begin{proof}
It follows from Definition \ref{D:weaktracialRokhlin} and Proposition \ref{P:equivariantfirstfactorembedding}.\end{proof}
To define and characterize the action dual to a compact group action with the weak tracial Rokhlin property, we  recall the tracial approximate representability of a finite abelian group action due to Phillips, and we present it based on \cite[Definition 4.8]{LO:Dualities} taking care of the modification from $A_{\infty}$  to $A_{\omega}$.

\begin{defn}\label{D:tracialapproximaterepresentability}
Let $G$ be a finite abelian group and $A$ be a simple unital separable infinite dimensional $C\sp*$-algebra. We say that $\alpha:G\to\Aut(A)$ is tracially approximately representable if for every positive nonzero element $z$ in $A_{\omega}$, there are a projection $e$ in $A_{\omega}\cap A'$ and a  unitary representation $u:G \to eA_{\omega}e$ such that 
\begin{enumerate}
\item $\alpha_{\omega, g}(eae)=u_g (eae)u_g^*$ in $A_{\omega}$\quad \text{for every $a$ in $A$},
\item $\alpha_{\omega, g}(u_h)=u_h$ for all $g,h\in G$,
\item $1-e$ is Murray-von Neumann equivalent to a projection $\overline{zA_{\omega}z}$.
\end{enumerate}
\end{defn}

Similarly we can characterize the tracial approximate representability of an action in terms of a tracailly sequentially-split map into the ultrapower. We denote the canonical unitary representation implementing $\alpha:G\to\Aut(A)$ by $\lambda^{\alpha}:G \to U(\mathcal{M}(A\rtimes_{\alpha}G))$.
  
\begin{thm}\cite[Proposition 4.8]{LO:Dualities}\label{T:approximaterepresentability}
Let $G$ be a finite abelian group and $A$ a simple unital separable infinite dimensional $C\sp*$-algebra. Then $\alpha:G\to\Aut(A)$ is tracially approximately representable if and only if for every nonzero positive element $z$ in $A_{\omega}$ there are a projection $e\in A_{\omega}\cap A'$ and an equivariant $*$-homomorphism $\psi:(A\rtimes_{\alpha}G, \Ad \lambda^{\alpha}) \to (A_{\omega}, \alpha_{\omega})$ such that 
\begin{enumerate}
\item $\psi(a)=ae$ for every $a\in A$, $\psi(\lambda^{\alpha}_g)=\omega_g$, 
\item $1-\psi(1_A)=1-e$ is Murray-von Neumann equivalent to a  projection in $\overline{zA_{\omega}z}$. 
\end{enumerate}
\end{thm}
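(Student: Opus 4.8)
The plan is to establish the equivalence by constructing, from each side, the data required by the other, exploiting the correspondence between the unitary representation $u$ in Definition \ref{D:tracialapproximaterepresentability} and the equivariant $*$-homomorphism $\psi$ in the statement. The key observation is that a covariant pair consisting of a $*$-homomorphism $A\to A_\omega$ together with a unitary representation of $G$ into $A_\omega$ assembles canonically into a $*$-homomorphism out of the crossed product $A\rtimes_\alpha G$, via its universal property. So first I would address the forward direction ``$\Longrightarrow$'': assume $\alpha$ is tracially approximately representable, so for a given nonzero positive $z\in A_\omega$ we are handed a projection $e\in A_\omega\cap A'$ and a unitary representation $u:G\to eA_\omega e$ satisfying (1)--(3). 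I would define a covariant representation of $(A,G,\alpha)$ into $eA_\omega e$ by sending $a\mapsto eae$ and using $u_g$ for the group part; condition (1) of the definition is exactly the covariance relation $u_g(eae)u_g^*=\alpha_{\omega,g}(eae)$, so this pair integrates to a $*$-homomorphism $\psi:A\rtimes_\alpha G\to A_\omega$ with $\psi(a)=eae$ and $\psi(\lambda^\alpha_g)=u_g$.

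The content then is to verify the two listed properties for this $\psi$. Property (1) requires $\psi(a)=ae$ rather than $eae$; this is where centrality of $e$ is used, since $e\in A_\omega\cap A'$ gives $eae=ae^2$, but I would need $e$ to be a projection, which it is, so $eae=ae$ follows from $ea=ae$ and $e^2=e$. The equivariance of $\psi$ with respect to $(\Ad\lambda^\alpha,\alpha_\omega)$ must be checked on generators: on $A$ it reduces to $\alpha_{\omega,g}(ae)=\alpha_{\omega,g}(a)e$ together with $\alpha_{\omega,g}(eae)=u_g(eae)u_g^*$, and on the unitaries $\lambda^\alpha_g$ it reduces to condition (2) of the definition, namely $\alpha_{\omega,h}(u_g)=u_g$, combined with the fact that $\Ad\lambda^\alpha$ fixes each $\lambda^\alpha_g$ because $G$ is abelian. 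Property (2) is immediate since $\psi(1_A)=e$ and condition (3) of the definition already asserts that $1-e$ is Murray--von Neumann equivalent to a projection in $\overline{zA_\omega z}$.

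For the converse ``$\Longleftarrow$'', I would reverse this construction. Given $\psi$ as in the statement, set $e=\psi(1_A)$, which is a projection since $\psi$ is a unital-on-$A$ $*$-homomorphism composed appropriately, and set $u_g=\psi(\lambda^\alpha_g)$. Because $\psi$ is a $*$-homomorphism and the $\lambda^\alpha_g$ are unitaries in the multiplier algebra with $\lambda^\alpha_g a (\lambda^\alpha_g)^*=\alpha_g(a)$, the elements $u_g$ form a unitary representation of $G$ into $eA_\omega e$, and applying $\psi$ to the crossed-product relation yields the covariance identity which is precisely condition (1). Condition (2) follows from the equivariance of $\psi$ applied to $\lambda^\alpha_g$, and condition (3) is the restatement of property (2). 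I expect the main obstacle to be bookkeeping around where elements live: ensuring that $u_g$ genuinely lands in the corner $eA_\omega e$ and acts as a unitary of that corner, that the covariance relation $\psi(\lambda^\alpha_g)\psi(a)\psi(\lambda^\alpha_g)^*=\psi(\lambda^\alpha_g a (\lambda^\alpha_g)^*)=\psi(\alpha_g(a))=\alpha_g(a)e$ matches $u_g(ae)u_g^*$ correctly, and that the equivariance of $\psi$ is faithfully translated into conditions (1) and (2) without sign or index errors. Since the paper cites this as \cite[Proposition 4.8]{LO:Dualities}, the rigorous treatment of these identifications already exists; the modification from $A_\infty$ to $A_\omega$ is purely formal and does not affect the algebraic manipulations.
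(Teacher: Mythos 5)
The paper gives no proof of this theorem---it is quoted from \cite[Proposition 4.8]{LO:Dualities}---so the only internal point of comparison is the proof of Theorem \ref{T:tracially dim_{rep}=0}, which is the order-zero analogue. Your argument is correct and is the natural reconstruction: the covariant pair $\bigl(a\mapsto eae,\ u\bigr)$ into the corner $eA_{\omega}e$ integrates to a $*$-homomorphism on $A\rtimes_{\alpha}G$ by the universal property of the (full $=$ reduced, $G$ finite) crossed product, and conversely $e=\psi(1_A)$ and $u_g=\psi(\lambda^{\alpha}_g)$ recover the data of Definition \ref{D:tracialapproximaterepresentability}. Your route is in fact simpler than the proof of Theorem \ref{T:tracially dim_{rep}=0}: a genuine unitary representation integrates directly, with no need for the dilation into $A_{\omega}^{**}$ via \cite[Proposition 2.8]{GHS} that the order-zero case requires---this is exactly what the projection hypothesis buys. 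One step you use silently and should record explicitly: the identity $\alpha_{\omega,g}(ae)=\alpha_{\omega,g}(a)e$ needs $\alpha_{\omega,g}(e)=e$, which in the forward direction follows from putting $a=1_A$ into the covariance condition (then $\alpha_{\omega,g}(e)=u_geu_g^{*}=u_gu_g^{*}=e$, since $e$ is the unit of the corner containing $u_g$), and in the converse from applying the equivariance of $\psi$ to $1_A$. Finally, $\omega_g$ in item (1) of the statement is a typo for $u_g$, as your reading assumes.
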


\begin{cor}\cite[Corollary 4.9]{LO:Dualities}\label{C:traciallyapproxintermsofsequentiallysplit}
Let $G$ be a finite abelian group and $A$ be a unital separable simple infinite dimensional $C\sp*$-algebra.  Then $\alpha:G\to\Aut(A)$ is tracially approximately representable if and only if $\iota_A: (A, \alpha) \to (A\rtimes_{\alpha}G, \Ad(\lambda^{\alpha}))$ is $G$-tracially sequentially-split. 
\end{cor}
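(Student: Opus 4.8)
The plan is to read this off directly from Theorem~\ref{T:approximaterepresentability}, which has already recast tracial approximate representability as the existence of a suitable equivariant $*$-homomorphism out of the crossed product; the remaining work is to recognize that these data are exactly a witness to the $G$-tracial sequential-splitness of $\iota_A$.

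First I would record that $\iota_A$ is equivariant: since $\Ad(\lambda^{\alpha}_g)(a)=\lambda^{\alpha}_g a (\lambda^{\alpha}_g)^{*}=\alpha_g(a)$ for $a\in A$, the inclusion intertwines $\alpha$ and $\Ad(\lambda^{\alpha})$, so it is meaningful to ask whether $\iota_A$ is $G$-tracially sequentially-split. Unwinding Definition~\ref{D:equivorderzero} in the case where the approximate left inverse is a $*$-homomorphism, $\iota_A$ is $G$-tracially sequentially-split precisely when, for every nonzero positive $z\in A_{\omega}$, there are an equivariant $*$-homomorphism $\psi:(A\rtimes_{\alpha}G,\Ad\lambda^{\alpha})\to(A_{\omega},\alpha_{\omega})$ and a positive element $g\in A_{\omega}\cap A'$ with $\psi(\iota_A(a))=ag=ga$ for all $a\in A$ and $1-g\lesssim z$.

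The key observation is that here $g$ is automatically a projection. Since $A$ is unital, $\iota_A(1_A)=1_{A\rtimes_{\alpha}G}$, so $g=\psi(1_A)$, and a $*$-homomorphism sends the unit to a projection. Setting $e:=g=\psi(1_A)\in A_{\omega}\cap A'$, the relation $\psi(\iota_A(a))=ag$ becomes $\psi(a)=ae$, which is condition~(1) of Theorem~\ref{T:approximaterepresentability}. For the error term I would invoke the characterization of Cuntz subequivalence recalled in Section~2: as $1-e$ is a projection, $1-e\lesssim z$ holds if and only if $1-e$ is Murray-von Neumann equivalent to a projection in $\overline{zA_{\omega}z}$, which is condition~(2) of Theorem~\ref{T:approximaterepresentability}.

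Both implications are then immediate. If $\alpha$ is tracially approximately representable, Theorem~\ref{T:approximaterepresentability} produces, for each $z$, the projection $e$ and the $*$-homomorphism $\psi$; taking $g=e$ exhibits $\iota_A$ as $G$-tracially sequentially-split. Conversely, any witness $(\psi,g)$ to sequential-splitness yields, through $e=g=\psi(1_A)$, exactly the data required by Theorem~\ref{T:approximaterepresentability}, so $\alpha$ is tracially approximately representable. The only points that require care --- and the closest thing to an obstacle --- are that $g=\psi(1_A)$ is forced to be a projection and the passage between the $\lesssim$ and Murray-von Neumann formulations of the small error; both are routine given the preliminaries.
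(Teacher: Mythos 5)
Your proposal is correct and follows exactly the route the paper intends: the corollary is stated as an immediate consequence of Theorem~\ref{T:approximaterepresentability} (the paper itself gives no proof, citing \cite[Corollary 4.9]{LO:Dualities}), and your unwinding of the definition, together with the observations that $g=\psi(1_{A\rtimes_{\alpha}G})$ is automatically a projection and that for projections Cuntz subequivalence to $z$ matches Murray--von Neumann equivalence to a projection in $\overline{zA_{\omega}z}$, supplies precisely the missing details. No gaps.
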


We believe that the right definition  which extends Definition \ref{D:tracialapproximaterepresentability} properly must induce the order zero map from $(A\rtimes_{\alpha}G, \Ad \lambda^{\alpha})$ to $(A_{\omega}, \alpha_{\omega})$ replacing $\psi$ in Theorem  \ref{T:approximaterepresentability}. 

\begin{defn}\cite[Definition 2.7]{GHS}
Let $G$ be a (locally) compact group, and $B$ be a $C\sp*$-algebra. It is said that a strongly continuous function $u:G \to B$ is an \emph{order-zero representation} of $G$ on $B$ if the following conditions are satisfied: 
\begin{enumerate}
\item $u_g$ is a normal contractions for all $g \in G$, and $u_{1_G}$ is positive,
\item $u_g u_h = u_1 u_{gh}$ for all $g, h \in G$, 
\item $u_g^*=u_{g^{-1}}$ for all $g\in G$. 
\end{enumerate}
\end{defn}

We first suggest the dual notion of the weak tracial Rokhlin property of a compact group action in term of strongly approximate innerness of a discrete group action. 
\begin{defn}\label{D:theweaktracialapproximaterepresentability}
Let $G$ be countable discrete  group and $A$ be a unital separable $C\sp*$-algebra. Then we say that  $\alpha:G\to\Aut(A)$ has the weak tracial approximate representability if for every nonzero positive element $z$ in $A_{\omega}$  there exist a positive nonzero contraction $e$ in $A_{\omega}\cap A'$ and an order zero representation $u: G \to A_{\omega}$ such that 
\begin{enumerate}
\item $u_{1_G}=e$, and $1-e \lesssim z$, 
\item $\alpha_{\omega, g}(ea)=u_g ea u_g^*$ for every $a \in A$ and $g \in G$, 
\item $\alpha_{\omega, g}(u_h)=u_{ghg^{-1}}$.
\end{enumerate} 
\end{defn}

As we characterize the weak tracial Rokhlin property of $\alpha$ in term of \emph{tracially} $\dim_{\Rok}(\alpha)=0$, we characterize the weak tracial approximate representability of $\alpha$ in term of \emph{tracially} $\dim_{\rep}(\alpha)=0$ (see \cite[Definition 2.10]{GHS} for the notation $\dim_{\rep}(\alpha)$). 

\begin{thm}\label{T:tracially dim_{rep}=0}
Let $G$ be countable discrete  group and $A$ be a unital $C\sp*$-algebra. Then $\alpha:G\to\Aut(A)$ has the weak tracial approximate representability if and only if for every nonzero positive element $z$ in $A_{\omega}$ there is an equivariant order zero map $\psi: (A\rtimes_{\alpha} G, \Ad \lambda^{\alpha}) \to (A_{\omega}, \alpha_{\omega})$ such that 
\begin{enumerate}
\item $\psi(a)=a\psi(1_A)$ for all $a\in A$ where $\psi(1_A) \in A_{\omega}\cap A'$, 
\item $1_{A_{\omega}}- \psi(1_A) \lesssim z$.  
\end{enumerate}
\end{thm}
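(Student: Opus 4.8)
The plan is to set up an explicit dictionary between the data $(e,u)$ appearing in Definition \ref{D:theweaktracialapproximaterepresentability} and the order zero map $\psi$ of the conclusion, and to check that the defining axioms of each side translate into those of the other through the structure theorem of Winter and Zacharias. Throughout I fix a nonzero positive $z\in A_\omega$ and work on the dense $*$-subalgebra of $A\rtimes_\alpha G$ spanned by the $a\lambda^\alpha_g$, recalling that $(\Ad\lambda^{\alpha})_g$ restricts to $\alpha_g$ on $A$ and sends $\lambda^\alpha_h$ to $\lambda^\alpha_{ghg^{-1}}$. This is the order-zero counterpart of Theorem \ref{T:approximaterepresentability}, so I would model the argument on that proof, replacing the unitary representation valued in a corner by an order zero representation and the $*$-homomorphism $\psi$ by an order zero map.

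For the forward direction, given $(e,u)$ I would define $\psi$ on the dense subalgebra by $\psi(a\lambda^\alpha_g)=au_g$, so that $\psi(a)=ae$, $\psi(1_A)=u_{1_G}=e$, and $\psi(\lambda^\alpha_g)=u_g$. To produce the underlying structure I would apply Winter--Zacharias to the order zero representation $u$, writing $u_g=e\,\overline u_g$ with $e=u_{1_G}$ a central positive contraction and $\overline u\colon G\to\mc M(C)$ an honest unitary representation commuting with $e$, where $C=C^*(\{u_g\})$. The routine parts are then the equivariance and conditions (1)--(2) of the statement: using $\alpha_{\omega,g}(e)=e$ (the $h=1_G$ case of property (3)) one gets on $A$ that $\psi(\alpha_g(a))=\alpha_g(a)e=\alpha_{\omega,g}(ae)=\alpha_{\omega,g}(\psi(a))$, while on $\lambda^\alpha_h$ equivariance reads $\psi(\lambda^\alpha_{ghg^{-1}})=u_{ghg^{-1}}=\alpha_{\omega,g}(u_h)$; and $\psi(a)=ae=a\psi(1_A)$ with $\psi(1_A)=e\in A_\omega\cap A'$ and $1_{A_\omega}-\psi(1_A)=1-e\lesssim z$.

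For the converse, given $\psi$ I would set $e=\psi(1_A)$ and $u_g=\psi(\lambda^\alpha_g)$. The order zero identity $\psi(x)\psi(y)=\psi(1)\psi(xy)$, which follows at once from the Winter--Zacharias factorization $\psi=e\,\pi_\psi$ with $e$ central, converts $\lambda^\alpha_g\lambda^\alpha_h=\lambda^\alpha_{gh}$ into $u_gu_h=e\,u_{gh}=u_{1_G}u_{gh}$; $*$-preservation of $\psi$ gives $u_g^*=u_{g^{-1}}$, and $u_g=e\,\pi_\psi(\lambda^\alpha_g)$ is a normal contraction since $\pi_\psi(\lambda^\alpha_g)$ is a unitary commuting with $e$. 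Thus $u$ is an order zero representation with $u_{1_G}=e$ positive and $1-e\lesssim z$, which is (1), while property (3) is once again equivariance applied to $\lambda^\alpha_h$. For the conjugation relation (2) I would start from the covariance $\pi_\psi(\lambda^\alpha_g)\,\pi_\psi(a)\,\pi_\psi(\lambda^\alpha_g)^*=\pi_\psi(\alpha_g(a))$ of the supporting $*$-homomorphism and combine it with $\psi(a)=ae$, compressing by the central contraction $e$, to arrive at $\alpha_{\omega,g}(ea)=u_g\,ea\,u_g^*$.

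The main obstacle I anticipate is exactly the bookkeeping forced by $e=\psi(1_A)=u_{1_G}$ being only a positive contraction rather than a projection, in contrast to the tracially approximately representable case of Theorem \ref{T:approximaterepresentability}. One cannot simply pass to the corner $eA_\omega e$, so the factorization $\psi=e\,\pi_\psi$ with $e$ central must be carried through every computation in order to track the positive factor. The two genuinely delicate points, where this subtlety concentrates, are: (i) in the forward direction, showing that the covariant pair built from the embedding of $A$ and the unitary representation $\overline u$ really integrates to a $*$-homomorphism of the \emph{full} crossed product, so that the candidate $\psi$ is honestly order zero; and (ii) in the converse, reconciling the conjugation relation (2) with the compressed covariance of $\pi_\psi$, since the two expressions match only once the appropriate powers of the central contraction $e$ are reconciled. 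I expect these reconciliations, rather than the dictionary itself, to constitute the heart of the proof.
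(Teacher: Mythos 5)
Your proposal follows essentially the same route as the paper: in the forward direction the paper dilates the order-zero representation $u$ to a unitary representation $v\colon G\to A_{\omega}^{**}$ commuting with $u_{1_G}$ via \cite[Proposition 2.8]{GHS}, integrates the covariant pair $(v,\iota_A)$ by \cite[Proposition 2.9]{GHS}, and sets $\psi=(v\rtimes\iota_A)(\cdot)\,u_{1_G}$ --- exactly your $e\,\overline{u}_g$ factorization, with your worry (i) discharged by the GHS integration lemma rather than by Winter--Zacharias (which applies to order zero maps on $C^*$-algebras, not literally to order-zero group representations). For the converse the paper likewise sets $u_g=\psi(\lambda^{\alpha}_g)$ and declares the verifications routine; the power-of-$e$ reconciliation you flag in (ii) is a genuine subtlety, but it is left equally unaddressed in the paper's own proof, so your attempt matches the paper both in strategy and in where the remaining care is needed.
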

\begin{proof}
For a nonzero positive element $z$ in $A_{\omega}$, consider an order zero representation $u: G \to A_{\omega}$ satisfying the conditions in Definition \ref{D:theweaktracialapproximaterepresentability}. In particular, $u_{1_G} \in A_{\omega}\cap A'$ and $1_{A_{\omega}} - u_{1_G} \lesssim z$.  Then by \cite[Proposition 2.8]{GHS} there is a unitary dilation $v: G \to A^{**}_{\omega}$ such that $v_g u_{1_G}=u_g=u_{1_G}v_g$ for all $g\in G$. Then we consider a covariant pair $(v , \iota_A)$. where $\iota_A: (A, \alpha) \to (A\rtimes_{\alpha}G, \Ad \lambda^{\alpha})$ is the natural embedding, which gives rise to a $^*$-homomorphism $v\rtimes \iota_A: A\rtimes_{\alpha} G \to A_{\omega}$ by \cite[Proposition 2.9]{GHS}. Note that this map sends $\sum_{g\in G} a_g \lambda_g^{\alpha}$ to $\sum_g a_g v_g$ and thus $v\rtimes \iota_A$ commutes with $u_{1_G}$. Now consider $\psi(\cdot)= v\rtimes \iota_A (\cdot)u_{1_G}$ which is an order zero map. First, note that  for all $a\in A$
\[
\psi(a)= au_{1_G}=a\psi(1_A)
\] 
where $\psi(1_A)=u_{1_G} \in A_{\omega}\cap A'$ and $1_{A_{\omega}}- \psi(1_A)= 1_{A_{\omega}} -u_{1_G} \lesssim z$.
  
We need to check whether $\psi$ is equivariant;  it is enough to consider an element $x$ of the form $a_h \lambda_h^{\alpha}$ (here $a_h \in A$) and check $\alpha_{\omega, g}(\psi(x))= \psi(\Ad \lambda_g^{\alpha}(x))$ for $g \in G$. Indeed, 
\[ \alpha_{\omega, g}(\psi(x))= \alpha_{\omega, g}(a_h u_h)=\alpha_g (a_h)u_h, \]
\[ \psi (\Ad \lambda_g^{\alpha}(x))= \psi(\alpha_g(a_h)\lambda_h^{\alpha})= \alpha_g(a_h)u_h.\]
Conversely, choose a nonzero positive element $z \in A_{\omega}$ and consider an equivariant order zero map $\psi: (A\rtimes_{\alpha} G, \Ad \lambda^{\alpha}) \to (A_{\omega}, \alpha_{\omega})$ such that 
\begin{enumerate}
\item $\psi(a)=a\psi(1_A)$ for all $a\in A$ where $\psi(1_A) \in A_{\omega}\cap A'$, 
\item $1_{A_{\omega}}- \psi(1_A) \lesssim z$.  
\end{enumerate}
Then we let $u_g=\psi(\lambda^{\alpha}_g)$, and it is not difficult to check $u$ is an order zero representation. 
Moreover, $\psi(1_A)= u_{1_G}$ and $1_{A_{\omega}}- u_{1_G} \lesssim z$.  The other conditions follow from plugging in  $x=a (\in A)$, $x=\lambda^{\alpha}_h$ separately into  $\alpha_{\omega, g}(\psi(x))= \psi(\Ad \lambda^{\alpha}_g(x))$. 

\end{proof}
Like the weak tracial Rokhlin property, the weak approximate representability is characterized in term of a tracially sequentially-split map by order zero. 
\begin{cor}\label{C:approxrepandmap}
 Let $G$ be countable discrete abelian group and $A$ a unital $C\sp*$-algebra. Then  $\alpha:G\to\Aut(A)$ has the weak tracial approximate representability if and only if the natural embedding $\iota_A: (A, \alpha) \to (A\rtimes_{\alpha}G, \Ad \lambda^{\alpha})$ is $G$-tracially sequentially-split by order zero. 
\end{cor}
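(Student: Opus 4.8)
The goal is to prove Corollary~\ref{C:approxrepandmap}, which asserts that for a countable discrete abelian group $G$ and a unital $C\sp*$-algebra $A$, the action $\alpha:G\to\Aut(A)$ has the weak tracial approximate representability if and only if the natural embedding $\iota_A: (A,\alpha)\to(A\rtimes_\alpha G,\Ad\lambda^\alpha)$ is $G$-tracially sequentially-split by order zero.

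The plan is to reduce the statement to the combination of Theorem~\ref{T:tracially dim_{rep}=0} (which reformulates the weak tracial approximate representability in terms of the existence, for each nonzero positive $z\in A_\omega$, of an equivariant order zero map $\psi:(A\rtimes_\alpha G,\Ad\lambda^\alpha)\to(A_\omega,\alpha_\omega)$ satisfying $\psi(a)=a\psi(1_A)$ with $\psi(1_A)\in A_\omega\cap A'$ and $1-\psi(1_A)\lesssim z$) and the very Definition~\ref{D:equivorderzero} of $G$-tracially sequentially-split by order zero. So first I would write down what Definition~\ref{D:equivorderzero} demands for the specific map $\phi=\iota_A:(A,\alpha)\to(A\rtimes_\alpha G,\Ad\lambda^\alpha)$: for every nonzero positive $x\in A_\omega$ there should exist an equivariant order zero map $\psi:(A\rtimes_\alpha G,\Ad\lambda^\alpha)\to(A_\omega,\alpha_\omega)$ together with a positive $g\in A_\omega\cap A'$ such that $\psi(\iota_A(a))=\psi(a)=ga=ag$ for all $a\in A$, and $1_{A_\omega}-g\lesssim x$.

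The main observation is that these two lists of conditions are essentially identical. For the forward direction, I would start from the weak tracial approximate representability, invoke Theorem~\ref{T:tracially dim_{rep}=0} to obtain, for each nonzero positive $z\in A_\omega$, an equivariant order zero map $\psi$ with $\psi(a)=a\psi(1_A)$, $\psi(1_A)\in A_\omega\cap A'$, and $1-\psi(1_A)\lesssim z$. Then I set $g:=\psi(1_A)$; since $g$ lies in $A_\omega\cap A'$ it commutes with every $a\in A$, so $\psi(\iota_A(a))=\psi(a)=a\psi(1_A)=ga=ag$, and $1_{A_\omega}-g=1-\psi(1_A)\lesssim z$. Thus $\psi$ and $g$ witness that $\iota_A$ is $G$-tracially sequentially-split by order zero. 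For the converse, I would run the argument in reverse: given that $\iota_A$ is $G$-tracially sequentially-split by order zero, the witnessing equivariant order zero map $\psi$ and positive element $g\in A_\omega\cap A'$ satisfy $\psi(a)=ga=ag$ and $1_{A_\omega}-g\lesssim x$; here $g=\psi(1_A)$ because $\psi(1_A)=\psi(\iota_A(1_A))=g\cdot 1_{A_\omega}=g$, so the conditions of Theorem~\ref{T:tracially dim_{rep}=0} hold, and that theorem returns the weak tracial approximate representability.

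I do not expect any serious obstacle, since this corollary is a bookkeeping consequence of the preceding theorem and the definition. The only points meriting a line of care are: first, identifying $g$ with $\psi(1_A)$ in both directions (using that $\iota_A$ is unital, so $\iota_A(1_A)=1_{A\rtimes_\alpha G}$ and hence $\psi(1_A)=g$), and second, confirming that the equation $\psi(a)=a\psi(1_A)$ is genuinely the same as the symmetric relation $ga=ag=\psi(a)$ under the hypothesis $\psi(1_A)=g\in A_\omega\cap A'$, the centrality being exactly what converts the one-sided equation into the two-sided one required by Definition~\ref{D:equivorderzero}. The abelian hypothesis on $G$ is inherited from Theorem~\ref{T:tracially dim_{rep}=0}; I would simply note that the dual group $\widehat{G}$ and the crossed-product machinery behave as there. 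The whole proof is therefore: apply Theorem~\ref{T:tracially dim_{rep}=0} in one direction and its converse in the other, reading off $g=\psi(1_A)$ each time.
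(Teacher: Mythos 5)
Your proposal is correct and is exactly the paper's argument: the corollary is proved there by simply citing Definition~\ref{D:equivorderzero} and Theorem~\ref{T:tracially dim_{rep}=0}, and your identification of $g=\psi(1_A)$ together with the observation that centrality of $\psi(1_A)$ turns the one-sided relation into the two-sided one is precisely the bookkeeping that makes this work. (One small remark: Theorem~\ref{T:tracially dim_{rep}=0} does not actually assume $G$ abelian, so nothing is ``inherited'' from it on that score; the abelian hypothesis in the corollary's statement plays no role in this equivalence.)
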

\begin{proof}
It follows by Definition \ref{D:equivorderzero} and Theorem \ref{T:tracially dim_{rep}=0}.
\end{proof}

Now we turn to show  a duality result for actions of finite abelian group with the weak tracial Rokhlin property and  action of finite abelian group with the weak tracial approximate representability. This generalizes the duality result by Phillips \cite[Theorem 3.11]{Phillips:tracial}.  Essential ingredients are Corollay {C:thefirstfactorembeddingsplit} and Corollary {C:approxrepandmap}.  The duality result is actually obtained as an application of the general duality result Theorem \ref{T:dualityoftraciallysequentiallysplitmap}, which we show next. 

\begin{lem}\cite[Lemma 2.3]{HO}\label{L:HO}
Let $A$ be a simple, unital, non type $I$ $C\sp*$-algebra and let $n\in \mathbb{N}$. For every nonzero element $a$ in $M_n(A)$ there exists a nonzero positive elements $b\in A$ such that $1\otimes b \lesssim a$.
\end{lem}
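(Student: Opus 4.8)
The plan is to prove Lemma~\ref{L:HO} by combining two standard facts: first, that in a simple, unital, non type~$I$ $C^*$-algebra one can find positive elements that are ``arbitrarily small in Cuntz comparison'' via the existence of suitable positive elements with mutually orthogonal copies, and second, that Cuntz subequivalence of a single element in $M_n(A)$ can be detected after cutting down to a corner isomorphic to $A$. The key observation is that $1 \otimes b$ for $b \in A^+$ is the element of $M_n(A)$ that is block-diagonal with $b$ in each of the $n$ diagonal entries, so the statement asserts that we can dominate, in the Cuntz sense, a whole diagonal copy of $b$ by a single nonzero element $a$.

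First I would reduce to understanding the structure of the nonzero element $a \in M_n(A)$. Since $a \neq 0$, the positive element $a^*a$ is nonzero, and $a \lesssim a^*a$ in the usual sense (indeed $a^*a \sim aa^*$ and $a$ is Cuntz-dominated by $a^*a$); more to the point, it suffices to produce $b$ with $1 \otimes b \lesssim a^*a$, since Cuntz subequivalence only depends on the element through its hereditary subalgebra and $\overline{a^*a \, M_n(A) \, a^*a}$ agrees with the relevant data. So I may assume from the outset that $a$ is a nonzero positive element of $M_n(A)$. Now I would pass to the hereditary subalgebra $\overline{a M_n(A) a}$, which is a nonzero hereditary subalgebra of the simple $C^*$-algebra $M_n(A)$, hence itself nonzero and, by simplicity of $M_n(A)$, ``full'' in the sense that it is not contained in any proper ideal.

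The crux of the argument is then to exploit that $A$ is simple and non type~$I$. I would invoke the fact that a simple, non type~$I$ $C^*$-algebra admits, for each nonzero positive element and each $n$, a family of $n$ mutually orthogonal, mutually Cuntz-equivalent nonzero positive elements inside the hereditary subalgebra it generates; this is precisely the kind of ``divisibility'' that fails only for type~$I$ (in fact elementary) algebras. Concretely, using that $M_n(A)$ is simple and non type~$I$, inside $\overline{a M_n(A) a}$ I can find $n$ mutually orthogonal positive elements $c_1, \dots, c_n$, each Cuntz-equivalent to a common nonzero positive element, and each of these can be arranged (again by simplicity, so that every nonzero positive element of $A$ is ``large enough'' after matrix amplification) to dominate a copy of a fixed small $b \in A^+$. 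Assembling $c_1 \oplus \cdots \oplus c_n$ as a block-diagonal element orthogonally supported and Cuntz-below $a$, and choosing $b$ so that $b \lesssim c_j$ viewed appropriately in $A$, yields $1 \otimes b = \operatorname{diag}(b, \dots, b) \lesssim \operatorname{diag}(c_1, \dots, c_n) \lesssim a$.

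The main obstacle I anticipate is the passage from ``$n$ orthogonal Cuntz-equivalent pieces inside $\overline{aM_n(A)a}$'' to the clean block-diagonal form $1 \otimes b$ with a genuine $b \in A$ rather than merely some positive element of $M_n(A)$; the non type~$I$ hypothesis is exactly what guarantees the requisite orthogonal divisibility (type~$I$ algebras have finite-dimensional irreducible representations obstructing it), and the simplicity is what lets every nonzero positive element of $A$ absorb a small copy of any other after amplification. I would therefore spend most of the care verifying that the $n$ orthogonal pieces can be chosen to be unitarily conjugate to a single diagonal block pattern, so that the resulting dominated element is literally of the form $\operatorname{diag}(b,\dots,b)$ for some nonzero $b \in A^+$, and confirming that these manipulations respect Cuntz subequivalence throughout.
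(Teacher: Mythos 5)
The paper itself gives no proof of this lemma; it is imported verbatim from Hirshberg--Orovitz \cite[Lemma 2.3]{HO}, so your proposal can only be measured against the standard argument there. Your two ingredients are the right ones --- (i) a simple non-type-$I$ $C^*$-algebra admits, inside every nonzero hereditary subalgebra, $n$ mutually orthogonal, mutually Cuntz-equivalent nonzero positive elements, and (ii) Cuntz data in $M_n(A)$ can be pushed into a corner isomorphic to $A$ --- but the final assembly has a genuine gap. Your orthogonal pieces $c_1,\dots,c_n$ live in $M_n(A)$, so $\operatorname{diag}(c_1,\dots,c_n)$ is an element of $M_{n^2}(A)$, not of $M_n(A)$, and cannot be ``Cuntz-below $a$'' as written; what you must compare $1\otimes b=\sum_j b\otimes e_{jj}$ against is the orthogonal sum $c_1+\cdots+c_n\lesssim a$, and for that you need a single nonzero $b\in A_+$ with $b\otimes e_{11}\lesssim c_j$ for every $j$. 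That is exactly the step you leave open, and the route you propose for closing it --- conjugating the $c_j$ by a unitary into a common diagonal block pattern --- is neither available in general nor needed.

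The standard way to close the gap is to reverse the order of your two steps. After replacing $a$ by $a^*a$, some diagonal entry $c:=e_{ii}ae_{ii}$, viewed in $A$, is nonzero, and $c\otimes e_{ii}=e_{ii}ae_{ii}\lesssim a$ with $c\otimes e_{ii}\sim c\otimes e_{11}$; this is the corner-cutting step, done first. Now apply the divisibility fact (i) inside $\overline{cAc}\subseteq A$ --- legitimate because every nonzero hereditary subalgebra of a simple non-elementary algebra is again simple and non-elementary --- to obtain mutually orthogonal, mutually equivalent nonzero positive $b_1,\dots,b_n\in\overline{cAc}$, and set $b=b_1$. Then $b\otimes e_{jj}\sim b_j\otimes e_{11}$ for each $j$, the summands on each side are mutually orthogonal, and hence
\[ 1\otimes b=\sum_j b\otimes e_{jj}\ \lesssim\ \sum_j b_j\otimes e_{11}=\Bigl(\sum_j b_j\Bigr)\otimes e_{11}\ \lesssim\ c\otimes e_{11}\ \lesssim\ a. \]
With this ordering the orthogonal pieces are genuine elements of $A$ from the start, and the block-diagonal bookkeeping that you correctly identified as the main obstacle disappears.
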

\begin{lem}\label{L:stablesequentialsplit}
Let $G$ be a finite group and $\alpha:G\to\Aut(A)$ and $\beta:G\to\Aut(B)$  two actions on unital separable simple infinite dimensional $C\sp*$-algebras $A$ and $B$ respectively. Then $\phi:(A, \alpha) \to (B, \beta)$ is $G$-tracially sequentially-split by order zero if and only if $\phi\otimes \id_{M_n}: (A\otimes M_n, \alpha \otimes \rho) \to (B\otimes M_n, \beta\otimes \rho)$ is $G$-tracially sequentially-split by order zero for any $n \in \mathbb{N}$ where $\rho$ is an action $G$ on $M_n$.  

\end{lem}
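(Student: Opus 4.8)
The plan is to reduce everything to the single identification $(A\otimes M_n)_\omega\cong A_\omega\otimes M_n$, under which $(\alpha\otimes\rho)_\omega=\alpha_\omega\otimes\rho$ and the central sequence algebra $(A\otimes M_n)_\omega\cap (A\otimes M_n)'$ is carried onto $(A_\omega\cap A')\otimes 1_n$ (commute first with $1_A\otimes M_n$, then with $A\otimes 1_n$). A preliminary step for the harder implication is to remove the action $\rho$: since every automorphism of $M_n$ is inner (Skolem--Noether), $\rho_g=\Ad(u_g)$ for a (projective) unitary representation $g\mapsto u_g$, so $\alpha\otimes\rho$ is an exterior perturbation of $\alpha\otimes\id_{M_n}$ by the cocycle $1\otimes u_g$, which lives in the common matrix factor and is intertwined by $\phi\otimes\id_{M_n}$. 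I would check that being $G$-tracially sequentially-split by order zero is insensitive to this perturbation by transporting a tracial approximate left inverse through the supporting $*$-homomorphism of its Winter--Zacharias decomposition and the implementing unitaries $\Pi(1\otimes u_g)$; this lets me assume $\rho=\id_{M_n}$ in the backward implication (the projective case being absorbed by passing to $u\otimes\bar u$ on $M_n\otimes M_n$, which is harmless since the statement is quantified over all $n$).

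For the forward implication I take, for a given nonzero positive $X\in (A\otimes M_n)_\omega\cong M_n(A_\omega)$, a tracial approximate left inverse $\psi\colon (B,\beta)\to (A_\omega,\alpha_\omega)$ of $\phi$ and amplify it: $\Psi:=\psi\otimes\id_{M_n}$ is equivariant and order zero (from the factorisation $\psi=h\pi_\psi$ one gets $\Psi=(h\otimes 1)(\pi_\psi\otimes\id)$), and $G:=\Psi(1\otimes 1_n)=\psi(1_B)\otimes 1_n$ lies in $(A_\omega\cap A')\otimes 1_n$ with $\Psi((\phi\otimes\id)(a\otimes m))=G(a\otimes m)$. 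The defect is $1-G=1_n\otimes(1_{A_\omega}-\psi(1_B))$, so everything reduces to producing a single nonzero positive $z\in A_\omega$ with $1_n\otimes z\lesssim X$ and then choosing $\psi$ for that $z$. This is where Lemma \ref{L:HO} enters: fixing a positive representative $(X_k)_k$ of $X$, for $\omega$-many $k$ the element $X_k\in M_n(A)$ is nonzero, so Lemma \ref{L:HO} yields nonzero positive $b_k\in A$ with $1_n\otimes b_k\lesssim X_k$; normalising $\|b_k\|=1$ and setting $z:=[(b_k)_k]$, the standard passage of Cuntz subequivalence to the ultrapower (the requisite norm control obtained by replacing a near-witness $d_k$ by the bounded element $d_kX_k^{1/2}$) gives $1_n\otimes z\lesssim X$. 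Combining, $1-G=1_n\otimes(1-\psi(1_B))\lesssim 1_n\otimes z\lesssim X$.

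For the backward implication, having reduced to $\rho=\id_{M_n}$, I feed the hypothesis the nonzero positive element $X:=z\otimes e_{11}$ attached to a given nonzero positive $z\in A_\omega$, obtaining an equivariant order zero map $\Psi\colon (B\otimes M_n,\beta\otimes\id)\to ((A\otimes M_n)_\omega,\alpha_\omega\otimes\id)$ and a positive central $G=g_0\otimes 1_n$ with $g_0\in A_\omega\cap A'$ (automatically $\alpha_\omega$-fixed, as $G=\Psi(1\otimes 1_n)$ is the image of a fixed element) satisfying $\Psi((\phi\otimes\id)(y))=Gy$ and $1-G=1_n\otimes(1-g_0)\lesssim z\otimes e_{11}$. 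Restricting the last comparison to the $(1,1)$-corner gives $1-g_0\lesssim z$. To manufacture a left inverse of $\phi$ valued in $A_\omega$ I invoke the Winter--Zacharias structure theorem: the map $b\mapsto \Psi(b\otimes e_{11})$ is order zero with unit image $\Psi(1\otimes e_{11})=G(1\otimes e_{11})=g_0\otimes e_{11}$, so its range lies in $\overline{(g_0\otimes e_{11})(A\otimes M_n)_\omega(g_0\otimes e_{11})}=\overline{g_0A_\omega g_0}\otimes e_{11}\subseteq A_\omega\otimes e_{11}$. Identifying this corner with $A_\omega$ yields an order zero $\psi\colon B\to A_\omega$ with $\psi(\phi(a))=g_0a$; equivariance is immediate since the trivial action fixes $e_{11}$, and $1-\psi(1_B)=1-g_0\lesssim z$, so $\phi$ is $G$-tracially sequentially-split by order zero.

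The main obstacle is this backward ``de-stabilisation'': converting an equivariant order zero left inverse valued in $A_\omega\otimes M_n$ into one valued in $A_\omega$ compatibly with the $G$-action on the matrix factor. The corner compression $b\mapsto \Psi(b\otimes e_{11})$ is order zero and lands in $A_\omega\otimes e_{11}$ exactly because of the Winter--Zacharias range restriction, but it is equivariant only when $e_{11}$ is $G$-invariant, which fails for a general $\rho$ (e.g.\ an ergodic action of $\mathbb{Z}/n\times\mathbb{Z}/n$ on $M_n$ has no invariant rank-one projection). This is why the Skolem--Noether reduction to $\rho=\id_{M_n}$ is not cosmetic but the crux of the argument; the only other technical point, common to both directions, is the norm-controlled passage of Cuntz subequivalence to the ultrapower that makes Lemma \ref{L:HO} applicable levelwise.
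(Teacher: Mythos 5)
Your forward direction follows the paper's architecture (amplify a tracial approximate left inverse $\psi$ of $\phi$ to $\psi\otimes\id_{M_n}$ and reduce the comparison to an element of the form $1_n\otimes z$ via Lemma \ref{L:HO}), but the step producing $z$ has a genuine gap. From $1_n\otimes b_k\lesssim X_k$ for $\omega$-many $k$ you cannot conclude $1_n\otimes [(b_k)]\lesssim [(X_k)]$: levelwise Cuntz subequivalence does not pass to ultrapowers unless, for each fixed tolerance, the witnesses have norms bounded uniformly in $k$. Concretely, in a UHF algebra take orthogonal projections $p_k,q_k$ with $\tau(p_k)\to 0$ and $\tau(q_k)=\tfrac12$; then $q_k\lesssim p_k+\tfrac1k q_k$ for every $k$, yet $[(q_k)]\not\lesssim [(p_k+\tfrac1k q_k)]=[(p_k)]$ by comparing limit traces. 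Your proposed repair --- replacing $d_k$ by $e_k=d_kX_k^{1/2}$ --- bounds $\|e_k\|$ but changes what is witnessed: it shows $1_n\otimes b_k\approx e_ke_k^{*}$ with $e_k^{*}e_k$ lying in the hereditary subalgebra of $X_k$, and levelwise membership in $\overline{X_k M_n(A) X_k}$ also fails to pass to $\overline{X M_n(A_\omega) X}$ (same counterexample). Since Lemma \ref{L:HO} supplies no uniformity in $X_k$, the inequality $1_n\otimes z\lesssim X$ is not established. The paper instead applies Lemma \ref{L:HO} once, inside the ultrapower, to get $b\in A_\omega$ with $b\otimes 1\lesssim z$ directly; a levelwise route would at minimum require first cutting down to $(X-\epsilon)_+\neq 0$ for a fixed $\epsilon$ and extracting witnesses with $k$-independent bounds.

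In the backward direction you take a genuinely different and much heavier route. You compress to the $(1,1)$-corner $b\mapsto\Psi(b\otimes e_{11})$, correctly note this is equivariant only if $e_{11}$ is $\rho$-invariant, and therefore prepend a Skolem--Noether de-twisting of $\rho$. That reduction is the crux of your argument and is not carried out; moreover, for projective $u$ the passage to $u\otimes\bar u$ on $M_{n^2}$ does not address the stated hypothesis, which concerns $\phi\otimes\id_{M_n}$ for the given $\rho$ --- upgrading it to $\phi\otimes\id_{M_{n^2}}$ with $\rho\otimes\bar\rho$ already invokes the forward direction, which at that point rests on the gap above. None of this is needed: the paper restricts $\widetilde\psi$ to $B\otimes 1_n$, which is equivariant for \emph{every} $\rho$ since $B\otimes 1_n$ is globally invariant, and identifies $\widetilde\psi(1)=g\otimes 1_n$ with $g\in A_\omega\cap A'$ from the relative commutant computation $(A_\omega\otimes M_n)\cap(A\otimes M_n)'=(A_\omega\cap A')\otimes 1_n$ --- a computation you state but do not exploit. (Your corner compression does have one virtue: the Winter--Zacharias range restriction forces the values into $A_\omega\otimes e_{11}$, a point the paper's restriction to $B\otimes 1_n$ glosses over. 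But the price you pay for it is the unproven $\rho$-reduction.)
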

\begin{proof}
 Suppose that $\phi:(A, \alpha) \to (B, \beta)$ is $G$-tracially sequentially-split by order zero.  Consider a nonzero positive element $z$ in $(A\otimes M_n)_{\omega} \cong A_{\omega}\otimes M_n$. 
Since $(A\otimes M_n)$ is also simple and non type $I$, there exists nonzero positive element $b \in A_{\omega}$ such that $b\otimes 1 \lesssim z$ by Lemma \ref{L:HO}. For this $b\neq 0$ we take a tracial approximate left inverse $\psi:(B,\beta) \to (A_{\omega}, \alpha_{\omega})$ and a positive contraction $g=\psi(1_B)\in A_{\omega}\cap A'$ such that
\begin{enumerate}
\item $\psi(\phi(a))=ag$, 
\item $1-g \lesssim b$.
\end{enumerate}  
To show $\phi\otimes \id_{M_n}$ is $G$-tracially sequentially-split by order zero, we consider $\psi\otimes \id_{M_n}: B\otimes M_n \to A_{\omega}\otimes M_n$ which is an order zero map by \cite[Corollary 3.3]{WZ}. 
Then $1_{A_{\omega}\otimes M_n} - (g\otimes 1)=(1-g)\otimes 1 \lesssim b\otimes 1\lesssim z$.  Note that $g \otimes 1$ is in $(A\otimes M_n)_{\omega}\cap (A\otimes M_n)'$ and $1_{A_{\omega}\otimes M_n}- \psi\otimes \id_{M_n}(1_{B\otimes M_n})= (1-g) \otimes 1$. Moreover, for $a\in A$
\[ \psi\otimes \id_{M_n}(\phi\otimes \id_{M_n}(a\otimes e_{ij}))=ag\otimes e_{ij}=(a\otimes e_{ij})(g\otimes 1),\]
where $\{ e_{ij}\}$ is a set of matrix units.  Therefore we have shown that $\phi\otimes \id_{M_n}$ is $G$-tracially sequentially-split. \\

Conversely, suppose that $\phi \otimes \id_{M_n}$ is $G$-tracially sequentially-split by order zero. Take any nonzero positive element $z\in A_{\omega}$, and consider a tracial approximate left inverse $\widetilde{\psi}$ corresponding to $z\otimes e_{11} \in A_{\omega}\otimes M_n$. Then we define $\psi:B \to A_{\omega}$ by the restriction of $\widetilde{\psi}$ to $B\otimes 1$. Note that  $(A_{\omega}\otimes M_n) \cap (A\otimes M_n)' \subset (A_{\omega}\otimes M_n) \cap (1\otimes M_n)'= A_{\omega} \otimes 1$ and $(A_{\omega}\otimes M_n) \cap (A\otimes M_n)' =(A_{\omega}\cap A')\otimes 1$. It follows that $\widetilde{\psi}(1)= g\otimes 1 $ where $g\in A_{\omega}\cap A'$. From  $1-\widetilde{\psi}(1)=(1-g)\otimes 1 \lesssim z\otimes e_{11}$,   we can deduce that $1-g \lesssim z$ in $A_{\omega}$. Also, we see that by viewing $A=A\otimes 1$
\[\psi(\phi(a))=\psi(\phi(a)\otimes1)=\widetilde{\psi}((\phi(a)\otimes1)=(a\otimes 1)(g\otimes1)=ag.\]
Thus, $\psi$ is a $G$-tracial approximate left inverse for $\phi$ corresponding to $z$.  
\end{proof}

We are ready to prove the following duality result between a $G$-tracially sequentially-split map by order zero and $\widehat{G}$-tracially sequentially-split map by order zero  for a finite abelian group $G$ as one of our main results which is an extended version of \cite[Theorem 4.13]{LO:Dualities}. 
\begin{lem}\cite[Lemma 5.1]{HO}\label{L:positive}
Let $\alpha:G\to\Aut(A)$ be an action of a discrete group $G$ on a simple, unital $C\sp*$-algebra $A$. Suppose that $\alpha_g$ is outer for all $g\in G\setminus \{1_G\}$. Then for every nonzero positive element $a$ in the reduced crossed product $A\rtimes_{\alpha, r} G$, there exists a nonzero element $b\in A$ such that $b\lesssim a$ in $A\rtimes_{\alpha, r}G $. 
\end{lem}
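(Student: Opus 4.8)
The plan is to reduce the statement to the diagonal via the canonical faithful conditional expectation $E\colon A\rtimes_{\alpha,r}G\to A$ and then to use outerness to annihilate the off-diagonal terms. Since $E$ is faithful and $a$ is a nonzero positive element, $c:=E(a)$ is a nonzero positive element of $A$; set $M=\|c\|>0$. First I would approximate $a$ in norm by a finite sum $a'=\sum_{g\in F}a_g\lambda^{\alpha}_g$ with $a_g\in A$ and $F\subset G$ a finite set containing $1_G$, so that $\|a-a'\|<\epsilon_1$ and hence $\|a_{1_G}-c\|=\|E(a')-E(a)\|<\epsilon_1$. To retain a definite amount of the diagonal after cutting down, I would pass to the hereditary subalgebra $B=\overline{g_0 A g_0}$, where $g_0=(c-M/2)_+\neq 0$; writing $q$ for the support projection of $g_0$ one has $cq\geq (M/2)q$, so every positive $x\in B$ satisfies $xcx\geq (M/2)x^2$, and in particular $\|xcx\|\geq M/2$ whenever $\|x\|=1$.

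The engine of the argument, and the step where the outerness hypothesis is essential, is Kishimoto's orthogonality lemma: because $A$ is simple and each $\alpha_g$ $(g\neq 1_G)$ is outer, for the finitely many coefficients $\{a_g\}_{g\in F\setminus\{1_G\}}$ and any $\epsilon_2>0$ there is a positive norm-one element $x\in B$ with $\|x\,a_g\,\alpha_g(x)\|<\epsilon_2$ for all $g\in F\setminus\{1_G\}$. This is the main obstacle: its proof rests on the properly outer behaviour of $\alpha_g$ on a simple algebra, and it is precisely what forces $a_g\alpha_g(x)$ to be almost annihilated by $x$ while the diagonal piece is left essentially untouched. I would invoke this lemma (Kishimoto's lemma for outer actions of discrete groups on simple $C\sp*$-algebras) rather than reprove it.

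With such an $x$ in hand I would conjugate. Since $\lambda^{\alpha}_g x=\alpha_g(x)\lambda^{\alpha}_g$, the $g$-coefficient of $xa'x$ is $x a_g\alpha_g(x)$, so $xa'x$ differs from its diagonal part $x a_{1_G} x$ by at most $\sum_{g\in F\setminus\{1_G\}}\|x a_g\alpha_g(x)\|<(|F|-1)\epsilon_2$ in norm. Combining this with $\|a-a'\|<\epsilon_1$ and $\|a_{1_G}-c\|<\epsilon_1$ gives $\|xax-xcx\|<2\epsilon_1+(|F|-1)\epsilon_2=:\eta$, where both $xax$ and $xcx$ are positive. Choosing $\epsilon_1,\epsilon_2$ small enough that $\eta<M/2\leq\|xcx\|$ ensures that $(xcx-\eta)_+$ is a nonzero positive element of $A$. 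Finally, by R{\o}rdam's lemma $(xcx-\eta)_+\lesssim xax$, and since $xax=(a^{1/2}x)^*(a^{1/2}x)$ is Cuntz equivalent to $a^{1/2}x^2 a^{1/2}\leq a$ we obtain $xax\lesssim a$; hence $b:=(xcx-\eta)_+\in A$ is nonzero and satisfies $b\lesssim a$ in $A\rtimes_{\alpha,r}G$, as required.
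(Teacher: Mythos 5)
Your argument is correct, and it is essentially the proof of the cited source \cite[Lemma 5.1]{HO} (the paper itself only quotes the lemma without proof): reduce to the diagonal via the faithful conditional expectation, cut down by a positive element in the hereditary subalgebra of $(E(a)-\|E(a)\|/2)_+$ furnished by Kishimoto's lemma to kill the off-diagonal coefficients, and finish with R{\o}rdam's lemma together with $xax\lesssim a$. No gaps; the only point worth flagging is that you should cite the version of Kishimoto's lemma that handles finitely many outer automorphisms simultaneously and lets you place $x$ in a prescribed nonzero hereditary subalgebra, which is exactly the form used in \cite{HO}.
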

 
\begin{thm}\label{T:dualityoftraciallysequentiallysplitmap}
Let $G$ be a finite abelian group and $A$ and $B$  unital separable simple infinite dimensional $C\sp*$-algebras where $\alpha$ and $\beta$ act on respectively.  Further we assume that $\alpha:G\to\Aut(A)$ is an action  such that $A\rtimes_{\alpha}G$ is simple, in particular an outer action.  Then  the equivariant $*$-homomorphism $\phi:(A, \alpha) \to (B, \beta)$ is $G$-tracially sequentially-split by order zero if and only if $\widehat{\phi}=\phi\rtimes G:(A\rtimes_{\alpha}G, \widehat{\alpha}) \to (B\rtimes_{\beta}G, \widehat{\beta})$ is $\widehat{G}$-tracially sequentially-split by order zero. 
\end{thm}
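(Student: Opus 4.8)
The plan is to establish only the implication ``$\phi$ is $G$-tracially sequentially-split by order zero, so $\widehat{\phi}$ is $\widehat{G}$-tracially sequentially-split by order zero,'' and then to recover the converse from Takai duality together with Lemma~\ref{L:stablesequentialsplit}. Throughout I use that, since $G$ is finite, there is a canonical identification $(A\rtimes_{\alpha}G)_{\omega}\cong A_{\omega}\rtimes_{\alpha_{\omega}}G$ under which the induced action $(\widehat{\alpha})_{\omega}$ corresponds to the dual action $\widehat{(\alpha_{\omega})}$, and likewise for $B$.

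For the forward implication, fix a nonzero positive $z\in (A\rtimes_{\alpha}G)_{\omega}$. Since $A$ is simple and $\alpha$ is outer (because $A\rtimes_{\alpha}G$ is simple), an ultrapower version of Lemma~\ref{L:positive} produces a nonzero positive $b\in A_{\omega}$ with $b\lesssim z$ in $(A\rtimes_{\alpha}G)_{\omega}$. Applying the hypothesis to this $b$ gives an equivariant order zero map $\psi\colon (B,\beta)\to (A_{\omega},\alpha_{\omega})$ and a positive contraction $g=\psi(1_B)\in A_{\omega}\cap A'$ with $\psi(\phi(a))=ag=ga$ for all $a\in A$ and $1-g\lesssim b$. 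Combining the equivariance of $\psi$ and $\phi$ one computes, for $a\in A$ and $h\in G$,
\[
g\,\alpha_h(a)=\psi(\phi(\alpha_h(a)))=\alpha_{\omega,h}(\psi(\phi(a)))=\alpha_{\omega,h}(g)\,\alpha_h(a),
\]
whence $g=\alpha_{\omega,h}(g)$; in particular $g$ is fixed by $\alpha_{\omega}$ and thus commutes with the canonical unitaries $\lambda_h$. Now extend $\psi$ via Lemma~\ref{L:extension}: the map $\Psi:=\psi\rtimes G\colon B\rtimes_{\beta}G\to A_{\omega}\rtimes_{\alpha_{\omega}}G\cong (A\rtimes_{\alpha}G)_{\omega}$ is a contractive order zero map, and a check on elements $b\lambda_h$ shows it intertwines $\widehat{\beta}$ and $(\widehat{\alpha})_{\omega}$. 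Regarding $\widetilde{g}:=g$ inside $(A\rtimes_{\alpha}G)_{\omega}$, the centrality of $g$ in $A_{\omega}$ together with its $\alpha_{\omega}$-invariance yields $\widetilde{g}\in (A\rtimes_{\alpha}G)_{\omega}\cap(A\rtimes_{\alpha}G)'$, and for $y=\sum_{h}a_h\lambda_h$ one gets $\Psi(\widehat{\phi}(y))=\sum_h a_h g\lambda_h=y\widetilde{g}$, while $1-\widetilde{g}=1-g\lesssim b\lesssim z$. Hence $\Psi$ is a tracial approximate left inverse for $\widehat{\phi}$, as required.

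For the converse, suppose $\widehat{\phi}$ is $\widehat{G}$-tracially sequentially-split by order zero. Here $A\rtimes_{\alpha}G$ is unital, separable, simple, and infinite dimensional, $B\rtimes_{\beta}G$ is unital and separable, and $\widehat{\alpha}$ is outer since Takai duality gives $(A\rtimes_{\alpha}G)\rtimes_{\widehat{\alpha}}\widehat{G}\cong A\otimes M_{|G|}$, which is simple. Applying the already-proved implication to $\widehat{\phi}$ (with group $\widehat{G}$) shows that $\widehat{\widehat{\phi}}$ is $\widehat{\widehat{G}}$-tracially sequentially-split by order zero. Under the Takai isomorphisms for $A$ and $B$, which are equivariant for $\widehat{\widehat{\alpha}}$ and $\alpha\otimes\rho$ for a suitable action $\rho$ of $\widehat{\widehat{G}}\cong G$ on $M_{|G|}$, the map $\widehat{\widehat{\phi}}$ is carried to $\phi\otimes\id_{M_{|G|}}$. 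Since the property of being tracially sequentially-split by order zero is preserved under equivariant isomorphism, $\phi\otimes\id_{M_{|G|}}$ is $G$-tracially sequentially-split by order zero, and Lemma~\ref{L:stablesequentialsplit} then yields that $\phi$ itself is $G$-tracially sequentially-split by order zero.

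The step I expect to be the main obstacle is the ultrapower version of Lemma~\ref{L:positive} used at the start of the forward direction. Lemma~\ref{L:positive} supplies, for a representing sequence $(z_n)$ of $z$, nonzero positive $b_n\in A$ with $b_n\lesssim z_n$ in $A\rtimes_{\alpha}G$; assembling these into a genuine Cuntz subequivalence $b=[(b_n)]\lesssim z$ in the ultrapower requires controlling the norms of the elements implementing the coordinatewise subequivalences, which is handled by the standard $\varepsilon$-cut-down estimates for Cuntz comparison (with Lemma~\ref{L:HO} available when a matrix amplification must first be entered). The remaining care lies in tracking dual-equivariance through Lemma~\ref{L:extension} and the centrality and $\alpha_{\omega}$-invariance of $g$, so that $\widetilde{g}$ is genuinely central in $(A\rtimes_{\alpha}G)_{\omega}$.
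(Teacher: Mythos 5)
Your proposal is correct and follows essentially the same route as the paper: the forward direction extends the tracial approximate left inverse to the crossed products via Lemma~\ref{L:extension} after using Lemma~\ref{L:positive} to replace $z$ by a positive element of $A_{\omega}$, and the converse applies the forward implication to $\widehat{\phi}$, transports $\widehat{\widehat{\phi}}$ to $\phi\otimes\id_{M_{|G|}}$ by Takai duality, and finishes with Lemma~\ref{L:stablesequentialsplit}. If anything, you are more careful than the paper in flagging that Lemma~\ref{L:positive} is stated at the level of $A$ and needs a coordinatewise (representing-sequence) argument to yield the Cuntz subequivalence in the ultrapower, a point the paper's proof passes over silently.
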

\begin{proof}
Suppose that an equivariant $*$-homomorphism $\phi:(A, \alpha) \to (B, \beta)$ is $G$-tracially sequentially-split by order zero. Given a nonzero positive element $z$ in $(A\rtimes_{\alpha}G)_{\omega}$, by Lemma \ref{L:positive}  there is a nonzero positive element $a$ in $A_{\omega}$  such that $a \lesssim z$ in $(A\rtimes_{\alpha}G)_{\omega}$. Then we can take the equivarian tracial approximate left inverse $\psi:(B, \beta) \to (A_{\omega}, \alpha_{\omega})$ such that $\psi(\phi(x))=x\psi(1_B)$ for all $x$ in $A$ and $1-\psi(1_B) \lesssim a$ in $A_{\omega}$.  Let us denote the positive contraction $\psi(1_B)$ by $e$. Then 
\[1-e \lesssim a \lesssim  z.\] 
So consider the order zero map $\psi\rtimes G$ by Lemma \ref{L:extension}.
From the equivariantness of $\psi$, $e$ is invariant under the action of $\alpha_{\omega}$ so that  
\[ ((\psi \rtimes G)\circ(\phi \rtimes G))(a\lambda^{\alpha}_g) =(\psi \rtimes G)(\phi(a)\lambda^{\beta}_g)=\psi(\phi(a))\lambda_{g}^{\alpha_{\omega}}=(ae)\lambda_{g}^{\alpha_{\infty}}=
a\lambda^{\alpha_{\omega}}_g e. \]
Moreover,  via $A_{\omega} \hookrightarrow A_{\omega}\rtimes_{\alpha_{\omega}}G \hookrightarrow (A\rtimes_{\alpha}G)_{\omega}$
\[1-(\psi\rtimes G)(1)=1-\psi(1)=1-e.\]
\[ \widehat{\alpha}_{\omega, \gamma}((\psi\rtimes G) (a\lambda^{\beta}_g))=\gamma(g) \psi(a)\lambda^{\alpha_{\omega}}_g= (\psi\rtimes G)( \widehat{\beta}_{\gamma}(a\lambda_g^{\beta})).\] 
Thus we have shown that $\psi\rtimes G$ is the equivaraint tracial approximate inverse for $\phi \rtimes G$ and we are done.   

Conversely, suppose that the equivariant $*$-homomorphism $\widehat{\phi}=\phi \rtimes G:(A \rtimes_{\alpha}G, \widehat{\alpha}) \to (B \rtimes_{\beta}G, \widehat{\beta})$ is $\widehat{G}$-tracially sequentially-split by order zero. Note that by Takai duality \cite{T:duality} there are  equivariant isomorphisms, 
\[\kappa_A: (A\rtimes_{\alpha}G\rtimes_{\widehat{\alpha}}\widehat{G}, \widehat{\widehat{\alpha}}) \cong (A\otimes\mc{K}(l^2(G)), \alpha\otimes \rho)\] 
\[\kappa_B:(B\rtimes_{\beta}G\rtimes_{\widehat{\beta}}\widehat{G}, \widehat{\widehat{\beta}}) \cong (B\otimes \mc{K}(l^2(G)), \beta\otimes \rho)\] where $\rho$ is the $G$-action on the algebra of compact operators $\mc{K}(l^2(G))$ induced by the right-regular representation  for a compact abelian group $G$. 
Since $A$ is simple, we see that $\widehat{\alpha}: \widehat{G}\to \Aut(A\rtimes_{\alpha} G)$ is an action such that $A\rtimes_{\alpha} G \rtimes_{\widehat{\alpha}}\widehat{G}$ is simple.  By the claim that we have shown in the above we know that $\widehat{\widehat{\phi}}:( A\rtimes_{\alpha} G \rtimes_{\widehat{\alpha}}\widehat{G}, \widehat{\widehat{\alpha}}) \to ( B\rtimes_{\beta} G \rtimes_{\widehat{\beta}}\widehat{G}, \widehat{\widehat{\beta}})$ is $G$-tracially equivalently sequentially-split by order zero such that  the following  diagram is commutative:
\begin{equation}\label{D:Takai}
\xymatrix{ (A\rtimes_{\alpha}G\rtimes_{\widehat{\alpha}}\widehat{G}, \widehat{\widehat{\alpha}})  \ar[d]_{\kappa_A}\ar[r]^{\widehat{\widehat{\phi}}} & (B\rtimes_{\beta}G\rtimes_{\widehat{\beta}}\widehat{G}, \widehat{\widehat{\beta}}) \ar[d]^{\kappa_B}\\
                         (A\otimes M_n, \alpha\otimes \rho)\ar[r]^{\phi\otimes \id_{M_n}} & (B\otimes M_n, \beta\otimes \rho)}
\end{equation}
here $n$ is the cardinality of the group $G$ \cite[Remark 3.15]{BS}.  

Note that the isomorphism $\kappa_A$ induces the isomorphism denoted by $(\kappa_A)_{\omega}$ between $((A\rtimes_{\alpha}G\rtimes_{\widehat{\alpha}}\widehat{G})_{\omega}, (\widehat{\widehat{\alpha}})_{\omega}) $ and $((A\otimes M_n)_{\omega}, (\alpha\otimes \rho)_{\omega})$.
Now for  a nonzero positive element $z$ in $(A\otimes M_n)_{\omega}$ consider $(\kappa_A)_{\omega}^{-1}(z)=\tilde{z}$ in $(A\rtimes_{\alpha}G\rtimes_{\widehat{\alpha}}\widehat{G})_{\omega}$. Since $\widehat{\widehat{\phi}}$ is $G$-tracially sequentially-split by order zero, we have an order zero map $\widehat{\widehat{\psi}}$ from $(B\rtimes_{\beta}G\rtimes_{\widehat{\beta}}\widehat{G}, \widehat{\widehat{\beta}})$ to $((A\rtimes_{\alpha}G\rtimes_{\widehat{\alpha}}\widehat{G})_{\omega}, (\widehat{\widehat{\alpha}})_{\omega})$ such that 
\begin{enumerate}
\item $\widehat{\widehat{\psi}}(\widehat{\widehat{\phi}}(x))=x g$ for any $x\in A\rtimes_{\alpha}G\rtimes_{\widehat{\alpha}}\widehat{G}$ where $g=\widehat{\widehat{\psi}}(1) \in (A\rtimes_{\alpha}G\rtimes_{\widehat{\alpha}}\widehat{G})_{\omega} \cap (A\rtimes_{\alpha}G\rtimes_{\widehat{\alpha}}\widehat{G})'$,
\item $1-g \lesssim \tilde{z}$ in $(A\rtimes_{\alpha}G\rtimes_{\widehat{\alpha}}\widehat{G})_{\omega}$.
\end{enumerate}
Combining the map $\widehat{\widehat{\psi}}$ with (\ref{D:Takai}) we obtain the following diagram;
\begin{equation}\label{D:duality}
\xymatrix{ (A\rtimes_{\alpha}G\rtimes_{\widehat{\alpha}} \widehat{G}, \widehat{\widehat{\alpha}}) \ar[rd]_{\widehat{\widehat{\phi}}} \ar@{-->}[rr] \ar[dd]_{\kappa_A}&& ((A\rtimes_{\alpha}G\rtimes_{\widehat{\alpha}}\widehat{G})_{\omega}, (\widehat{\widehat{\alpha}})_{\omega})\ar[dd]^{(\kappa_A)_{\omega}}&\\
                          & (B\rtimes_{\beta}G\rtimes_{\widehat{\beta}}\widehat{G}, \widehat{\widehat{\beta}}) \ar[dd]^{\kappa_B } \ar[ur]_{\quad \widehat{\widehat{\psi}}: \text{order zero} } \\
                (A\otimes M_n, \alpha \otimes \rho) \ar[rd]_{\phi \otimes \id_{M_n}} \ar@{-->}[rr] && ((A\otimes M_n)_{\omega}, (\alpha \otimes \rho)_{\omega}) &\\
    &(B\otimes M_n, \beta\otimes \rho)  }          
                \end{equation}
Now consider the map $\tilde{\psi}=(\kappa_A)_{\omega} \circ \widehat{\widehat{\psi}} \circ (\kappa_B)^{-1}$ as suggested in (\ref{D:duality}). Then it is equivariant since all three maps are. For any $a\in A$, denoting by $\{e_{ij}\mid 1 \le i,j \le n\}$ the matrix units for $M_n$,   
\[
\begin{split}
(\tilde{\psi} \circ (\phi\otimes \id_{M_n}))(a\otimes e_{ij})&=((\kappa_A)_{\omega} \circ \widehat{\widehat{\psi}} \circ \kappa_B^{-1} \circ (\phi \otimes \id_{M_n}))(a \otimes e_{ij}) \\
&=((\kappa_A)_{\omega} \circ \widehat{\widehat{\psi}} \circ \widehat{\widehat{\phi}}\circ \kappa_A^{-1})(a\otimes e_{ij})\\
&=(\kappa_A)_{\omega}( \kappa_A^{-1}(a\otimes e_{ij})g)\\
&=(a\otimes e_{ij})((\kappa_A)_{\omega}(g)).
\end{split}
\] 
Moreover, 
$1-(\kappa_A)_{\omega}(g)=(\kappa_A)_{\omega}(1-g) \lesssim (\kappa_A)_{\omega} (\tilde{z})=z$. It follows  that $\tilde{\psi}$ is an order zero map which is the tracial approximate inverse to $\phi \otimes \id_{M_n}$  for a nonzero positive $z$ in $(A\otimes M_n)_{\omega}$. By Lemma \ref{L:stablesequentialsplit} we conclude that $\phi$ is $G$-tracially sequentially-split by order zero. 
\end{proof}

Then we prove the duality between the weak tracial Rokhlin property and the weak tracial approximate representability of actions.  We refer the reader to \cite[Theorem 3.11]{Ali:2021} for a different approach.   

\begin{lem}\cite[Proposition 4.25]{BS}\label{L:BS1}
Let $G$ a compact abelian group, $A$ a $C^*$-algebra and $\alpha:G\to\Aut(A)$ a continuous action.  Then there exists an equivariant isomorphism 
\[ \phi: (A\rtimes_{\alpha} G \rtimes_{\widehat{\alpha}} \widehat{G}, \Ad(\lambda^{\widehat{\alpha}})) \longrightarrow ((C(G)\otimes A)\rtimes_{\sigma\otimes \alpha} G, \widehat{\sigma\otimes \alpha}) \] making the following diagram commute 
\begin{equation*}
\xymatrix{ (A\rtimes_{\alpha}G, \widehat{\alpha}) \ar[d]_{(1_{C(G)}\otimes \id_A)\rtimes G} \ar[rr]^{\iota_{A\rtimes_{\alpha}G}} && ((A\rtimes_{\alpha}G) \rtimes_{\widehat{\alpha}}\widehat{G},\Ad(\lambda^{\widehat{\alpha}})) \\
                            ((C(G)\otimes A)\rtimes_{\sigma\otimes \alpha}G, \widehat{\sigma \otimes \alpha})\ar[urr]_{\phi^{-1}} }
\end{equation*} 
\end{lem}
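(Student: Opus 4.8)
The plan is to prove the statement by exhibiting the \emph{inverse} isomorphism $\phi^{-1}$ directly from the universal property of the crossed product $(C(G)\otimes A)\rtimes_{\sigma\otimes\alpha}G$, and then to read off the equivariance and the commuting triangle essentially for free. This is a concrete avatar of Takai (Imai--Takai) duality, so the one genuinely nontrivial input --- injectivity of the constructed map --- is precisely where the duality theorem does the work.

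First I would locate the relevant generators inside $D:=A\rtimes_{\alpha}G\rtimes_{\widehat{\alpha}}\widehat{G}$. Write $\lambda^{\alpha}$ for the canonical $G$-implementers coming from $A\rtimes_{\alpha}G$ and $\mu_{\gamma}:=\lambda^{\widehat{\alpha}}_{\gamma}$ ($\gamma\in\widehat{G}$) for the implementers of the dual action. I would then use two elementary relations. Since $\widehat{\alpha}_{\gamma}|_{A}=\id_{A}$, the unitaries $\mu_{\gamma}$ commute with the canonical copy of $A$; and since $\widehat{G}$ is abelian they commute among themselves, so by Pontryagin duality $C^{*}(\widehat{G})\cong C(G)$ they generate a copy $\theta(C(G))\subseteq\mc{M}(D)$ commuting with $A$. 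Together with $A$ this gives a $*$-homomorphism $\Theta:C(G)\otimes A\to D$, $\Theta(f\otimes a)=\theta(f)a$. Next, from $\widehat{\alpha}_{\gamma}(\lambda^{\alpha}_{g})=\gamma(g)\lambda^{\alpha}_{g}$ one computes $\Ad(\lambda^{\alpha}_{g})$ on this copy: it acts by $\alpha_{g}$ on $A$ and, via $\lambda^{\alpha}_{g}\mu_{\gamma}(\lambda^{\alpha}_{g})^{*}=\overline{\gamma(g)}\mu_{\gamma}$, by the shift $\sigma_{g}$ on $\theta(C(G))$. Hence $\Ad(\lambda^{\alpha}_{g})\circ\Theta=\Theta\circ(\sigma_{g}\otimes\alpha_{g})$, so $(\Theta,\lambda^{\alpha})$ is a covariant pair for $(C(G)\otimes A,G,\sigma\otimes\alpha)$, and the universal property yields a $*$-homomorphism $\phi^{-1}:(C(G)\otimes A)\rtimes_{\sigma\otimes\alpha}G\to D$.

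I would then show $\phi^{-1}$ is an isomorphism. Surjectivity is immediate: its image contains $A$, all $\lambda^{\alpha}_{g}$, and $\theta(C(G))=C^{*}(\{\mu_{\gamma}\})$, and these generate $D$. Injectivity is the main obstacle; here I would invoke Takai duality, which guarantees that this canonical covariant construction is faithful --- equivalently, one exhibits $\phi$ in the reverse direction on generators and checks $\phi\circ\phi^{-1}=\id$ and $\phi^{-1}\circ\phi=\id$, recovering the $\mu_{\gamma}$ from the $C(G)$-coordinate by inverse Fourier transform. Either way, this is the step that carries the real content of the lemma.

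Finally, the equivariance and the triangle fall out of the construction. For $\delta\in\widehat{G}$, the action $\Ad(\lambda^{\widehat{\alpha}}_{\delta})=\Ad(\mu_{\delta})$ on $D$ fixes both $A$ and $\theta(C(G))$ and sends $\lambda^{\alpha}_{g}\mapsto\delta(g)\lambda^{\alpha}_{g}$; under $\phi^{-1}$ this is exactly the dual action $\widehat{\sigma\otimes\alpha}_{\delta}$, which fixes $C(G)\otimes A$ and multiplies the $G$-implementer $\lambda^{\sigma\otimes\alpha}_{g}$ by $\delta(g)$, so $\phi$ is $\widehat{G}$-equivariant. For the triangle, $((1_{C(G)}\otimes\id_{A})\rtimes G)(a\lambda^{\alpha}_{g})=(1_{C(G)}\otimes a)\lambda^{\sigma\otimes\alpha}_{g}$, and applying $\phi^{-1}$ gives $\Theta(1_{C(G)}\otimes a)\lambda^{\alpha}_{g}=a\lambda^{\alpha}_{g}=\iota_{A\rtimes_{\alpha}G}(a\lambda^{\alpha}_{g})$, so $\phi^{-1}\circ((1_{C(G)}\otimes\id_{A})\rtimes G)=\iota_{A\rtimes_{\alpha}G}$ as required. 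The only places demanding care are the character conventions in the relations $\widehat{\alpha}_{\gamma}(\lambda^{\alpha}_{g})=\gamma(g)\lambda^{\alpha}_{g}$ and $\sigma_{g}(f)=f(g^{-1}\cdot)$, which must be tracked consistently so that the shift and the scalar twists line up.
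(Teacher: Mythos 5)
The paper offers no proof of this lemma at all: it is quoted verbatim from Barlak--Szab\'{o} \cite[Proposition 4.25]{BS}, so there is nothing internal to compare against. Your argument is the standard one and is essentially sound: identifying $C^*(\{\lambda^{\widehat{\alpha}}_\gamma\})\cong C^*(\widehat{G})\cong C(G)$ inside $\mc{M}(A\rtimes_\alpha G\rtimes_{\widehat{\alpha}}\widehat{G})$, checking the covariance relation $\lambda^{\alpha}_g\mu_\gamma(\lambda^{\alpha}_g)^*=\overline{\gamma(g)}\mu_\gamma$ against $\sigma_g(f)=f(g^{-1}\cdot)$, and reading off both the $\widehat{G}$-equivariance and the commuting triangle from the resulting covariant pair is exactly how this is done, and your verification of the triangle ($\phi^{-1}((1\otimes a)\lambda^{\sigma\otimes\alpha}_g)=a\lambda^{\alpha}_g$) is correct. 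The one place where your write-up is genuinely thin is injectivity. A bare appeal to Takai duality does not suffice: Takai gives $A\rtimes_\alpha G\rtimes_{\widehat{\alpha}}\widehat{G}\cong A\otimes\mc{K}(L^2(G))$, and knowing that the domain and codomain of your surjection $\phi^{-1}$ are abstractly isomorphic does not make the surjection injective for a general (non-simple) $A$. The second route you mention in passing is the one that must actually be carried out: build $\phi$ explicitly from the covariant pair consisting of $(1_{C(G)}\otimes\id_A)\rtimes G$ together with the unitaries $\gamma\otimes 1\in\mc{M}(C(G)\otimes A)$ (which implement $\widehat{\alpha}$ on the image of $A\rtimes_\alpha G$ by the same character computation), check that $\phi$ and $\phi^{-1}$ are mutually inverse on the generators, and use that $\operatorname{span}\widehat{G}$ is dense in $C(G)$ to get surjectivity of $\phi$. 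With that step made explicit, your proof is complete and matches the cited source's construction.
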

\begin{lem}\cite[Proposition 4.26]{BS}\label{L:BS2}
Let $H$ be a discrete, abelian group. A a $C\sp*$-algebra and  $\beta:H\to\Aut(A)$ an action. There exists an equivariant $*$-isomorphism 
\[ \psi: (A\rtimes_{\beta} H \rtimes_{\Ad{(\lambda^{\beta}})}H, \widehat{\Ad(\lambda^{\beta}})) \longrightarrow (C(\widehat{H})\otimes (A\rtimes_{\beta} H), \sigma\otimes \widehat{\beta}) \] making the following diagram commute 
\begin{equation*}
\xymatrix{ (A\rtimes_{\beta}H, \widehat{\beta}) \ar[d]_{1\otimes \id_{A\rtimes_{\beta} H}} \ar[rr]^{\iota_A\rtimes H} && (A\rtimes_{\beta} H \rtimes_{\Ad{(\lambda^{\beta}})}H, \widehat{\Ad(\lambda^{\beta}})) \\
                           (C(\widehat{H})\otimes (A\rtimes_{\beta} H), \sigma\otimes \widehat{\beta}) \ar[urr]_{\psi^{-1}} }
\end{equation*} 
\end{lem}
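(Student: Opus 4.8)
The plan is to read Lemma~\ref{L:BS2} as a concrete instance of Takai duality for the \emph{inner} action $\gamma:=\Ad(\lambda^\beta)$ of $H$ on $B:=A\rtimes_\beta H$. Because each $\gamma_h=\Ad(\lambda^\beta_h)$ is implemented by the unitary $\lambda^\beta_h\in\mc M(B)$, the crossed product $B\rtimes_\gamma H$ should be nothing but $B$ tensored with $C^*(H)\cong C(\widehat H)$ (Fourier transform $\lambda_h\mapsto\widehat h$, $\widehat h(\chi)=\chi(h)$), and the entire content of the lemma is to pin down how the dual action $\widehat\gamma$ and the two structural maps in the diagram behave under this identification. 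Accordingly I would argue directly with generators: $B\rtimes_\gamma H$ is generated by the copy of $B$ (which already contains $A$ and the \emph{old} unitaries $\lambda^\beta_h$) together with the \emph{new} canonical unitaries $u_h$ of the outer crossed product.

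First I would build $\psi$ from an explicit covariant pair. Let $\pi\colon B\to C(\widehat H)\otimes B$ be the integrated form of the covariant pair $\big(a\mapsto 1\otimes a,\ V_h:=\widehat h\otimes\lambda^\beta_h\big)$ of the system $(A,H,\beta)$; this is covariant since $V_h(1\otimes a)V_h^*=1\otimes\beta_h(a)$ and $h\mapsto V_h$ is a homomorphism because $\widehat h\,\widehat k=\widehat{hk}$. Setting $W_h:=1\otimes\lambda^\beta_h$, I would check that $(\pi,W)$ is a covariant pair for $(B,H,\gamma)$, the only nontrivial relation $W_h\pi(\lambda^\beta_k)W_h^*=\pi(\gamma_h(\lambda^\beta_k))$ holding because $H$ is abelian. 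The integrated form $\psi:=\pi\rtimes W$ then satisfies
\[
\psi(a)=1\otimes a,\qquad \psi(\lambda^\beta_h)=\widehat h\otimes\lambda^\beta_h,\qquad \psi(u_h)=1\otimes\lambda^\beta_h .
\]
The crucial, and at first counterintuitive, point is that the old and new unitaries land in different places; their quotient gives $\psi(\lambda^\beta_h u_h^*)=\widehat h\otimes 1$, which is exactly what manufactures the $C(\widehat H)$-factor.

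Next I would verify the three conclusions. For bijectivity, the three formulas above show the image contains $\widehat h\otimes 1$, $1\otimes\lambda^\beta_h$ and $1\otimes A$, hence all of $C(\widehat H)\otimes B$, and an inverse is assembled from the two commuting prescriptions $\widehat h\otimes 1\mapsto\lambda^\beta_h u_h^*$ and $1\otimes b\mapsto(\iota_A\rtimes H)(b)$ (here one uses that $\lambda^\beta_h u_h^*$ commutes with $A$ and with every $u_k$, again by abelianness). For the diagram, reading off $\psi^{-1}(1\otimes b)=(\iota_A\rtimes H)(b)$ is precisely the identity $\psi^{-1}\circ(1\otimes\id_{A\rtimes_\beta H})=\iota_A\rtimes H$, so commutativity is built into the construction. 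For equivariance I would test $\psi\circ\widehat\gamma_\chi=(\sigma\otimes\widehat\beta)_\chi\circ\psi$ on the three generator types, using $\sigma_\chi(\widehat h)=\overline{\chi(h)}\,\widehat h$, $\widehat\beta_\chi(\lambda^\beta_h)=\chi(h)\lambda^\beta_h$ and $\widehat\gamma_\chi(u_h)=\chi(h)u_h$: on $\lambda^\beta_h$ the two phases $\overline{\chi(h)}$ and $\chi(h)$ cancel, and on $u_h$ both sides pick up the same factor $\chi(h)$.

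The main obstacle is the simultaneous matching of two separate demands: commutativity of the diagram forces $\psi$ on the embedded copy $1\otimes B$ to agree with $\iota_A\rtimes H$, while equivariance forces the twist $\lambda^\beta_h\mapsto\widehat h\otimes\lambda^\beta_h$ on the copy of $B$ sitting \emph{inside} $B\rtimes_\gamma H$. Reconciling these is exactly what dictates sending the old and new unitaries to $\widehat h\otimes\lambda^\beta_h$ and $1\otimes\lambda^\beta_h$ respectively, and the delicate part is the Pontryagin-duality bookkeeping that makes the phases cancel in the equivariance check. Once the conventions are fixed, covariance, bijectivity and commutativity of the diagram reduce to routine verifications on generators.
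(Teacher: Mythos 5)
Your construction is correct. Note that the paper itself gives no proof of this lemma --- it is quoted verbatim from Barlak--Szab\'{o} \cite[Proposition 4.26]{BS} --- and your argument is essentially the standard one used there: realize $B\rtimes_{\Ad(\lambda^{\beta})}H$ (with $B=A\rtimes_{\beta}H$) via the covariant pair $\bigl(\pi,\,W_h=1\otimes\lambda^{\beta}_h\bigr)$ with $\pi(a)=1\otimes a$, $\pi(\lambda^{\beta}_h)=\widehat{h}\otimes\lambda^{\beta}_h$, invert it using the unitary representation $h\mapsto\lambda^{\beta}_h u_h^{*}$ (which commutes with the image of $\iota_A\rtimes H$ precisely because $H$ is abelian), and check equivariance and commutativity of the triangle on the three generator types. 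All the verifications you sketch (covariance of $(\pi,W)$, surjectivity via $\widehat{h}\otimes 1=\psi(\lambda^{\beta}_h u_h^{*})$, the phase cancellation $\overline{\chi(h)}\cdot\chi(h)=1$ on $\psi(\lambda^{\beta}_h)$) go through with the paper's conventions $\sigma_{\chi}(f)=f(\chi^{-1}\cdot)$ and $\widehat{\gamma}_{\chi}(u_h)=\chi(h)u_h$, so the proposal is a complete and self-contained proof of the cited statement.
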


\begin{thm}\label{T:dualityofgroupaction}
 Let $A$ be an infinite dimensional simple separable unital C*- algebra, and let $\alpha:G\to\Aut(A)$ be an action of a finite abelian group G on A such that $A\rtimes_{\alpha}G$ is also simple. Then
\begin{enumerate}
\item $\alpha$ has the weak tracial Rokhlin property if and only if  its dual action has the weak approximate representability.
\item $\alpha$ has the weak approximate representability if and only if its dual action has the weak tracial Rokhlin property.
\end{enumerate}
\end{thm}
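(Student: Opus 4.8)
The plan is to convert each of the four properties into a statement about a tracially sequentially-split map by order zero using the characterizations already established, and then to travel between $\alpha$ and $\widehat{\alpha}$ with the duality Theorem \ref{T:dualityoftraciallysequentiallysplitmap}, re-identifying the resulting crossed-product maps through the commuting triangles of Lemmas \ref{L:BS1} and \ref{L:BS2}. I will use repeatedly that being ($H$-)tracially sequentially-split by order zero is unaffected by post-composing the codomain with an equivariant $*$-isomorphism (one simply composes the tracial approximate left inverse with the inverse isomorphism), so a commuting triangle lets me swap one equivariant map for another; likewise, both the weak tracial Rokhlin property and the weak tracial approximate representability are invariant under equivariant isomorphism.

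I would dispatch (2) first, since it is the clean case. By Corollary \ref{C:approxrepandmap}, $\alpha$ has the weak tracial approximate representability if and only if $\iota_A\colon (A,\alpha)\to (A\rtimes_{\alpha}G,\Ad\lambda^{\alpha})$ is $G$-tracially sequentially-split by order zero. Here the domain $A$ and the codomain $A\rtimes_{\alpha}G$ are both simple, separable, unital and infinite dimensional, and $\alpha$ is outer, so Theorem \ref{T:dualityoftraciallysequentiallysplitmap} applies verbatim: this holds iff $\iota_A\rtimes G\colon (A\rtimes_{\alpha}G,\widehat{\alpha})\to (A\rtimes_{\alpha}G\rtimes_{\Ad\lambda^{\alpha}}G,\widehat{\Ad\lambda^{\alpha}})$ is $\widehat{G}$-tracially sequentially-split by order zero. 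The commuting triangle of Lemma \ref{L:BS2} (with $H=G$, $\beta=\alpha$) identifies this last map, up to the equivariant isomorphism there, with the second-factor embedding $1_{C(\widehat{G})}\otimes\id_{A\rtimes_{\alpha}G}\colon (A\rtimes_{\alpha}G,\widehat{\alpha})\to (C(\widehat{G})\otimes (A\rtimes_{\alpha}G),\sigma\otimes\widehat{\alpha})$, and Corollary \ref{C:thefirstfactorembeddingsplit}, applied to the finite group $\widehat{G}$ acting on $A\rtimes_{\alpha}G$, says that the splitting of this embedding is equivalent to $\widehat{\alpha}$ having the weak tracial Rokhlin property. Concatenating the equivalences gives (2).

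For (1) the mirror-image chain is natural: by Corollary \ref{C:thefirstfactorembeddingsplit}, $\alpha$ has the weak tracial Rokhlin property iff $1_{C(G)}\otimes\id_A\colon (A,\alpha)\to (C(G)\otimes A,\sigma\otimes\alpha)$ is $G$-tracially sequentially-split by order zero; Theorem \ref{T:dualityoftraciallysequentiallysplitmap} followed by Lemma \ref{L:BS1} would translate this into the $\widehat{G}$-tracial splitting of $\iota_{A\rtimes_{\alpha}G}\colon (A\rtimes_{\alpha}G,\widehat{\alpha})\to ((A\rtimes_{\alpha}G)\rtimes_{\widehat{\alpha}}\widehat{G},\Ad\lambda^{\widehat{\alpha}})$, which by Corollary \ref{C:approxrepandmap} is equivalent to the weak tracial approximate representability of $\widehat{\alpha}$.

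The main obstacle is that, in contrast to (2), the codomain $C(G)\otimes A\cong A^{\oplus|G|}$ is \emph{not} simple, so Theorem \ref{T:dualityoftraciallysequentiallysplitmap} cannot be quoted verbatim for $\phi=1_{C(G)}\otimes\id_A$. I expect to resolve this in one of two equivalent ways. The first is to note that the hypotheses of Theorem \ref{T:dualityoftraciallysequentiallysplitmap} and of Lemma \ref{L:stablesequentialsplit} are stronger than their proofs require: simplicity of the codomain is never invoked, and only simplicity of the \emph{domain} $A$ together with outerness of $\alpha$ (equivalently, simplicity of $A\rtimes_{\alpha}G$, which forces $A\rtimes_{\alpha}G\rtimes_{\widehat{\alpha}}\widehat{G}\cong A\otimes M_{|G|}$ to be simple) enters; since $A$ is simple this suffices to run the argument for $\phi=1_{C(G)}\otimes\id_A$. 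The second, cleaner route is to deduce (1) from (2): applying (2) to the dual system $\widehat{\alpha}\colon\widehat{G}\to\Aut(A\rtimes_{\alpha}G)$, whose own double crossed product $A\rtimes_{\alpha}G\rtimes_{\widehat{\alpha}}\widehat{G}\cong A\otimes M_{|G|}$ is simple, yields that $\widehat{\alpha}$ has the weak tracial approximate representability iff $\widehat{\widehat{\alpha}}$ has the weak tracial Rokhlin property, while the Takai isomorphism used in Theorem \ref{T:dualityoftraciallysequentiallysplitmap} gives an equivariant identification $(A\rtimes_{\alpha}G\rtimes_{\widehat{\alpha}}\widehat{G},\widehat{\widehat{\alpha}})\cong (A\otimes M_{|G|},\alpha\otimes\rho)$. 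It then remains only to verify that the weak tracial Rokhlin property is stable under tensoring with $(M_{|G|},\rho)$, namely that $\alpha\otimes\rho$ has it iff $\alpha$ does; this follows by combining Corollary \ref{C:thefirstfactorembeddingsplit} with Lemma \ref{L:stablesequentialsplit}, or directly from the relative-commutant identity $(A_{\omega}\otimes M_{|G|})\cap (A\otimes M_{|G|})'=(A_{\omega}\cap A')\otimes 1$ together with Lemma \ref{L:HO} and a corner Cuntz-comparison argument. Either route, combined with (2), completes the proof of (1).
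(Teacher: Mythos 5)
Your proposal follows essentially the same route as the paper: both parts are obtained by translating each property into the statement that a specific equivariant map ($1_{C(G)}\otimes\id_A$ or $\iota_A$) is tracially sequentially-split by order zero via Corollaries \ref{C:thefirstfactorembeddingsplit} and \ref{C:approxrepandmap}, applying Theorem \ref{T:dualityoftraciallysequentiallysplitmap} to pass to the dual, and re-identifying the resulting crossed-product maps through the commuting triangles of Lemmas \ref{L:BS1} and \ref{L:BS2}. The one place you go beyond the paper is your observation that in part (1) the codomain $C(G)\otimes A\cong A^{\oplus|G|}$ is not simple, so Theorem \ref{T:dualityoftraciallysequentiallysplitmap} as literally stated does not apply to $\phi=1_{C(G)}\otimes\id_A$; the paper invokes it without comment. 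Your concern is legitimate and both of your remedies work: an inspection of the proofs of Theorem \ref{T:dualityoftraciallysequentiallysplitmap} and Lemma \ref{L:stablesequentialsplit} shows that simplicity is only ever used for the domain (via Lemmas \ref{L:HO} and \ref{L:positive} applied on the $A$-side), and the alternative of deducing (1) from (2) through Takai duality plus stability of the weak tracial Rokhlin property under tensoring with $(M_{|G|},\rho)$ is also sound. So your write-up is correct and, on this point, more careful than the paper's.
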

\begin{proof}
(1): Suppose that $\alpha$ has the weak tracial Rokhlin property. Then by Corollary \ref{C:tracialRokhlinviasequentiallysplitmap}  the map $1_{C(G)}\otimes \id_A: (A, \alpha)\to(C(G)\otimes A, \sigma\otimes\alpha)$ is $G$-tracially sequentially-split by order zero. Theorem \ref{T:dualityoftraciallysequentiallysplitmap} implies that $(1_{C(G)}\otimes \id_A)\rtimes G: (A\rtimes_{\alpha}G, \widehat{\alpha}) \to ((C(G)\otimes A)\rtimes_{\sigma \otimes \alpha} G, \widehat{\sigma \otimes \alpha})$ is $\widehat{G}$-tracially sequentially-split by order zero. This means that for every nonzero positive element $z$ in $(A\rtimes_{\alpha}G)_{\omega}$ there are a positive contraction $g$ in the ultraproduct of $A\rtimes_{\alpha}G$ and a corresponding equivariant tracial approximate inverse $\psi$ from $((C(G)\otimes A)\rtimes_{\sigma\otimes\alpha} G, \widehat{\sigma \otimes \alpha})$ to $((A\rtimes_{\alpha}G)_{\omega}, (\widehat{\alpha})_{\omega})$ such that $\psi(1_{C(G)\otimes A})=g$ and $1-g \lesssim z$. So we have the diagram below; 

\begin{equation*}\label{D:diagram1}
\xymatrix{ (A\rtimes_{\alpha}G, \widehat{\alpha}) \ar@{-->}[rr]_{\iota} \ar[rd]_{(1_{C(G)}\otimes \,\id_A)\rtimes G \quad} && ((A\rtimes_{\alpha} G)_{\omega}, \widehat{\alpha}_{\omega} ).&\\
                          & ((C(G)\otimes A)\rtimes_{\sigma\otimes \alpha}G, \widehat{\sigma \otimes \alpha})  \ar[ur]_{\psi: \text{order zero} }}
\end{equation*} 
Then combining this diagram with the diagram in Lemma \ref{L:BS1} we obtain the following diagram
 \begin{equation*}\label{D:diagram2}
\xymatrix{ (A\rtimes_{\alpha}G, \widehat{\alpha}) \ar[dd]_{\iota_{A\rtimes_{\alpha}G}}\ar[rd]_{(1_{C(G)}\otimes \,\id_A)\rtimes G \quad} \ar@{-->}[rr]^{ \iota} &&((A\rtimes_{\alpha}G)_{\omega}, (\widehat{\alpha})_{\omega})  &\\
                          & ((C(G)\otimes A)\rtimes_{\sigma\otimes \alpha}G, \widehat{\sigma \otimes \alpha})  \ar[ur]_{\psi: \,\text{order zero}} \\
  ((A\rtimes_{\alpha}G) \rtimes_{\widehat{\alpha}}\widehat{G},\Ad(\lambda^{\widehat{\alpha}})) \ar[ru]_{\phi} }
\end{equation*} 
Naturally, we consider $\widetilde{\psi}= \psi \circ \phi$ as a tracial approximate inverse for $\iota_{A\rtimes_{\alpha} G}$ which is indeed an order zero map. 

Then for any $b\in A\rtimes_{\alpha}G$
\[ \begin {split}
(\widetilde{\psi} \circ \iota_{A\rtimes_{\alpha}G})(b)&= (\psi \circ \phi \circ \iota_{A\rtimes_{\alpha}G})(b)\\
&=(\psi \circ (1_{C(G)}\otimes \id_A)\rtimes G)(b)\\
&=bg.
\end{split} \]
Moreover, 
$1- \widetilde{\psi}(1)=1-\psi(1)=1-g \lesssim z$.  Therefore, we have shown that $\iota_{A\rtimes_{\alpha}G}:(A\rtimes_{\alpha}G, \widehat{\alpha}) \to ((A\rtimes_{\alpha}G)\rtimes_{\widehat{\alpha}}\widehat{G}, \Ad ({\lambda^{\widehat{\alpha}}})$ is $\widehat{G}$-tracially sequentially-split by order zero. Then by Corollary \ref {C:approxrepandmap} $\widehat{\alpha}$ has the weak tracial approximate representability. 

Conversely, let $B=A\rtimes_{\alpha}G$.  If $\widehat{\alpha}$ has the weka tracial  approximate representability, then $\iota_{B}:(B, \widehat{\alpha}) \to (B \rtimes_{\widehat{\alpha}}\widehat{G}, \Ad(\lambda^{\widehat{\alpha}}))$ is $\widehat{G}$-tracially sequentially-split by order zero. It means that for a nonzero positive element $z \in B_{\omega}$ we have  the following  diagram  with the tracial approximate inverse $\psi$
\begin{equation*}\label{D:diagram2}
\xymatrix{ (B, \widehat{\alpha}) \ar[rd]_{ \iota_B} \ar@{-->}[rr]^{ \iota} &&(B_{\omega}, \widehat{\alpha}_{\omega}))  &\\
                          &( B \rtimes_{\widehat{\alpha}} \widehat{G}, \Ad (\lambda^{\widehat{\alpha}}))  \ar[ur]_{\psi: \, \text{order zero}} }
\end{equation*}
Again combining this diagram with the diagram in Lemma \ref{L:BS1} we obtain the following diagram
 \begin{equation*}\label{D:diagram2}
\xymatrix{ (A\rtimes_{\alpha}G, \widehat{\alpha}) \ar[dd]_{(1_{C(G)}\otimes \,\id_A)\rtimes G \quad}\ar[rd]_{\iota_{A\rtimes_{\alpha}G}}\ar@{-->}[rr]^{ \iota} &&((A\rtimes_{\alpha}G)_{\omega}, (\widehat{\alpha})_{\omega})  &    \\
                       &((A\rtimes_{\alpha}G) \rtimes_{\widehat{\alpha}}\widehat{G},\Ad(\lambda^{\widehat{\alpha}}))  \ar[ur]_{\psi: \, \text{order zero}}  \\
((C(G)\otimes A)\rtimes_{\sigma\otimes \alpha}G, \widehat{\sigma \otimes \alpha})    \ar[ru]_{\phi^{-1}} }
\end{equation*} 
Then using the same argument as before we can show that $(1_{C(G)}\otimes \id_A)\rtimes G$ is $\widehat{G}$-tracially sequentially-split by order zero.  By Theorem \ref{T:dualityoftraciallysequentiallysplitmap}  the map $1_{C(G)}\otimes \id_A: (A, \alpha) \to (C(G)\otimes A, \sigma \otimes \alpha)$ is $G$-tracially sequentially-split by order zero. Thus $\alpha$ has the weak tracial Rokhlin property by Corollary \ref{C:thefirstfactorembeddingsplit}.\\

(2): Suppose that $\alpha$ has the weak approximate representability. Then $\iota_A: (A, \alpha) \to (A\rtimes_{\alpha}G, \Ad(\lambda^{\alpha}) )$ is $G$-tracially sequentially split by oder zero.  
By Theorem \ref{T:dualityoftraciallysequentiallysplitmap} $\iota_A \rtimes G: (A\rtimes_{\alpha} G, \widehat{\alpha}) \to ((A\rtimes_{\alpha}G)\rtimes_{\Ad(\lambda^{\alpha})}G, \widehat{\Ad \lambda^{\alpha}})$ is 
$\widehat{G}$-tracially sequentially-split by order zero. Thus for a nonzero positive element $z$ in $(A\rtimes_{\alpha} G)_{\omega}$ we have the following diagram with the tracial approximate left inverse $\psi$
 \begin{equation*}\label{D:diagram5}
\xymatrix{ (A\rtimes_{\alpha}G, \widehat{\alpha}) \ar[rd]_{\iota_A \rtimes G} \ar@{-->}[rr]^{ \iota} &&((A\rtimes_{\alpha}G)_{\omega}, (\widehat{\alpha})_{\omega})  &\\
                          &((A\rtimes_{\alpha}G)\rtimes_{\Ad(\lambda^{\alpha})}G, \widehat{\Ad \lambda^{\alpha}})  \ar[ur]_{\psi: \, \text{order zero}}}
\end{equation*}
Combining this diagram with the diagram in Lemma \ref{L:BS2} we also have the following diagram
 \begin{equation*}\label{D:diagram6}
\xymatrix{ (A\rtimes_{\alpha}G, \widehat{\alpha}) \ar[dd]_{1_{C(\widehat{G})}\otimes \id_{A\rtimes_{\alpha}G}}\ar[rd]_{\iota_A \rtimes G} \ar@{-->}[rr]^{ \iota} &&((A\rtimes_{\alpha}G)_{\omega}, (\widehat{\alpha})_{\omega})  &\\
                          &((A\rtimes_{\alpha}G)\rtimes_{\Ad(\lambda^{\alpha})}G, \widehat{\Ad \lambda^{\alpha}})  \ar[ur]_{\psi: \, \text{order zero}} \\
(C(\widehat{G})\otimes(A\rtimes_{\alpha}G), \sigma\otimes \widehat{\alpha})\ar[ru]_{\phi} }
\end{equation*} 
This means that the second factor embedding $1_{C(\widehat{G})}\otimes \id_{A\rtimes_{\alpha}G}:(A\rtimes_{\alpha}G, \widehat{\alpha}) \to (C(\widehat{G}) \otimes (A\rtimes_{\alpha}G), \sigma\otimes \widehat{\alpha})$ has the tracial approximate left inverse $\psi \circ \phi$ for $z$, thus $\widehat{G}$-tracially sequentially-split by order zero. Therefore,  $\widehat{\alpha}$ has the weak tracial Rokhlin property by Corollary \ref{C:thefirstfactorembeddingsplit}.\\

Conversely if $\widehat{\alpha}$ has the weak tracial Rokhlin property, the second factor embedding $1_{C(\widehat{G})}\otimes \id_{A\rtimes_{\alpha}G}:(A\rtimes_{\alpha}G, \widehat{\alpha}) \to (C(\widehat{G}) \otimes (A\rtimes_{\alpha}G), \sigma\otimes \widehat{\alpha})$ is $\widehat{G}$-tracially sequentially-split by order zero by Corollary \ref{C:thefirstfactorembeddingsplit}. Thus for a nonzero positive element $z$ in $(A\rtimes_{\alpha}G)_{\omega}$ we have the following diagram with the tracial approximate left inverse $\widetilde{\psi}$. 
 \begin{equation*}\label{D:diagram7}
\xymatrix{ (A\rtimes_{\alpha}G, \widehat{\alpha}) \ar[rd]_{1_{C(\widehat{G})}\otimes \id_{A\rtimes_{\alpha}G}} \ar@{-->}[rr]^{ \iota} &&((A\rtimes_{\alpha}G)_{\omega}, (\widehat{\alpha})_{\omega})  &\\
                          &(C(\widehat{G})\otimes(A\rtimes_{\alpha}G), \sigma\otimes \widehat{\alpha})  \ar[ur]_{\widetilde{\psi}: \, \text{order zero}}
                          }
\end{equation*} 
Then again combining  this diagram with the diagram in Lemma \ref{L:BS2} we have the following diagram
 \begin{equation*}\label{D:diagram8}
\xymatrix{ (A\rtimes_{\alpha}G, \widehat{\alpha}) \ar[dd]_{\iota_A\rtimes G} \ar[rd]_{1_{C(\widehat{G})}\otimes \id_{A\rtimes_{\alpha}G}} \ar@{-->}[rr]^{ \iota} &&((A\rtimes_{\alpha}G)_{\omega}, (\widehat{\alpha})_{\omega})  &\\
                          &(C(\widehat{G})\otimes(A\rtimes_{\alpha}G), \sigma\otimes \widehat{\alpha})  \ar[ur]_{\widetilde{\psi}:\, \text{order zero} } \\
((A\rtimes_{\alpha}G)\rtimes_{\Ad(\lambda^{\alpha})}G, \widehat{\Ad \lambda^{\alpha}}). \ar[ru]_{\psi^{-1}} }
\end{equation*}
This means that  the map $\iota_A \rtimes G: (A\rtimes_{\alpha}G, \widehat{\alpha}) \to ((A\rtimes_{\alpha}G)\rtimes_{\Ad(\lambda^{\alpha})}G, \widehat{\Ad \lambda^{\alpha}})$ has the tracial approximate left inverse $\widetilde{\psi}\circ \psi^{-1}$ for $z$, thus  $\widehat{G}$-tracially sequentially-split by order zero. Then Theorem \ref{T:dualityoftraciallysequentiallysplitmap} implies that $\iota_A: (A, \alpha) \to (A\rtimes_{\alpha}G, \Ad(\lambda^{\alpha}))$ is $G$-tracially sequentially-split by order zero. It follows from Corollary \ref{C:approxrepandmap} that $\alpha$ has the weak tracial approximate representability.
\end{proof}
\section{Acknowledgements}
This research is a by-product  of the author's collaboration with H. Osaka. He would like to thank him for allowing me to publish it independently.   


\begin{thebibliography}{99}

\bibitem{Ali:2021} M. Ali Asadi-Vasfi, \emph{Weakly tracially approximately representable actions}, arXiv:2110.07081

\bibitem{BS} S. Barlak, G. Sazab\'{o}, \emph{Sequentially split $*$-homomorphisms between $C\sp*$-algebras}, Internat. J. Math.  \textbf{27}(2016), no.13.  48pp



\bibitem{Cu} J. Cuntz \emph{Dimension functions on simple $C^*$-algebras}, Math. Ann. {\bf 233}(1978), 145--153.,

\bibitem{G:compact} E. Gardella, \emph{Compact group actions with the Rokhlin property}, Trans. Amer. Math. Soc. \textbf{371}(2019), 2837--2874. 

\bibitem{GHS} E. Gardella, I. Hirsberg, and L. Santiago, \emph{Rokhlin dimension: duality, tracial properties, and crossed products}, Ergod. Th. \& Dynam. Sys. \textbf{41}(2021), 408--460.

\bibitem{HO}I. Hirshberg and J. Orovitz, \emph{Tracially $\mc{Z}$-absorbing $C\sp*$-algebras}, J. Funct. Anal. \textbf{265}(2013), 765 --785. 

\bibitem{HP}I. Hirshberg and N. Phillips, \emph{Rokhlin dimension: obstructions and permanence properties}, Doc. Math. \textbf{20}(2015), 199--236. 

\bibitem{HW:Rokhlin} I. Hirshberg and W. Winter, \emph{Rokhlin actions and self-absorbing $C^*$-algebras}, Pacific J. Math. \textbf{233} (2007), no. 1,  125--143.

\bibitem{Izumi:finite} M. Izumi, \emph{ Finite group actions on $C^*$-algebras with the Rokhlin property I}, Duke Math. J. \textbf{122} (2004), no. 2,  233--280.

 \bibitem{JS} X. Jiang and H. Su,  \emph{ On a simple unital projectionless $ C^*$-algebra}, Amer. J.  Math. \textbf{121}(1999) no.2,  359--413, doi:10.1353/ajm.1999.0012
 
 \bibitem{BJ}B. Jacelon, \emph{A simple, monotracial, stably projectionless $C^*$-algebra}, J. London. Math. Soc. \textbf{87}(2013), no.2, 365--383.
 
\bibitem{Kishi} A. Kishimoto, \emph{The Rokhlin property for automorphisms of UHF algebras}, J. Reine Angew. Math. \textbf{465} (1995), 183--196. 

\bibitem{Kishi1} A. Kishimoto, \emph{The Rokhlin property for shifts on UHF algebras and automorphisms of Cuntz algebras}, J. Funct. Anal. \textbf{140} (1996), 100--123. 

\bibitem{Lin:tracial} H. Lin, \emph{Tracial AF C*-algebras},
Trans. Amer. Math. Soc.\textbf{353}(2001), 693--722.

\bibitem{Lin:classification}H. Lin,  \emph{Classification of simple $C\sp*$-algebras of tracial topological rank zero} Duke Math. J. {\bf125}(2004), 91--119.

\bibitem{LO:Tracial} H. Lee and H. Osaka, \emph{Tracially sequentially-split $*$-homomorphisms between $C\sp*$-algebras},   Ann. Funct. Anal. \textbf{12}, 37 (2021). https://doi.org/10.1007/s43034-021-00115-y

\bibitem{LO:Dualities}H. Lee and  H. Osaka, \emph{On dualities of actions and inclusions}, J. Funct. Anal. \textbf{276}(2019), no. 2, 602--635. 

\bibitem{LO:Orderzero}H. Lee and  H. Osaka \emph{On permanence of regularity properties} J. Topology and Analysis, to appear


\bibitem{Phillips:tracial} N. C. Phillips, \emph{The tracial Rokhlin property for  actions of finite groups on $C^*$-algebras}, Amer. J. Math.~\textbf{133}(2011), no. 3, 581-- 636. 

\bibitem{Phillips:examples} N. C. Phillips, \emph{Finite cyclic group actions with the tracial Rokhlin property}, Trans. Amer. Math. Soc. \textbf{367}(2015), 5271--5300.


\bibitem{Ro:UHF1}M. R\o rdam, \emph{On the structure of simple $C^*$-algebras  tensored  with a UHF-algebra}, J. Funct. Anal. \textbf{100}(1991), 1--17.
 
\bibitem{Ro:UHF2} M.~R\o rdam, \emph{On the structure of simple $C^*$-algebras tensored with a UHF-algebra II},J. Funct. Anal. \textbf{107}(1992), 255 - 269. 

\bibitem{T:duality}H. Takai, \emph{On a duality for crossed products of $C^*$-algebras}, J. Funct. Anal. \textbf{19} (1975), 25–39.


\bibitem{WZ}  W. Winter and Z. Zacharias, \emph{Completely positive maps of order zero}, Munster J. Math. {\bf 2}(2009), 311--324. 




\end{thebibliography}
\end{document}